\newcommand\newterm[1]{\textit{#1}}
\newcommand\paren[1]{(#1)}
\newcommand\Paren[1]{\mleft(#1\mright)}
\newcommand\vertbar[1]{\lvert#1\rvert}
\newcommand\Angle[1]{\mleft\langle#1\mright\rangle}
\newcommand\rounddown[1]{\lfloor #1\rfloor}
\newcommand\finset[1]{\{#1\}}
\newcommand\comp\circ
\newcommand\restr[2]{#1\rvert_{#2}}
\newcommand\Zahlen{\mathbb Z}
\newcommand\Quot{\mathbb Q}
\newcommand\Comp{\mathbb C}
\newcommand\order\vertbar
\DeclareMathOperator\Image{Im}
\newcommand\gengp\Angle
\newcommand\multgp[1]{{#1}^\times}
\newcommand\affsp[1]{\mathbb A^{#1}}
\newcommand\projsp[1]{\mathbb P^{#1}}
\newcommand\wproj[1]{\mathbb P\paren{#1}}
\newcommand\hypsurf[1]{\{#1\}}
\newcommand\difsh[2]{\Omega_{#1}^{#2}}
\newcommand\Aut[1]{\operatorname{Aut}\paren{#1}}
\newcommand\AUT[1]{\operatorname{Aut}\biggl(#1\biggr)}
\DeclareMathOperator\Exc{Exc}
\newcommand\Gm{\mathbb G_{\mathrm m}}
\DeclareMathOperator\Supp{Supp}
\newcommand\lineq\sim
\newcommand\lineqQ{\lineq_\Quot}
\newcommand\numeq\equiv
\newcommand\Idx[2]{I_{#1}\paren{#2}}
\newcommand\idx[1]{\operatorname{idx}\paren{#1}}
\newcommand\klt{\mathrm{klt}}
\newcommand\sm{\mathrm{sm}}
\newcommand\term{\mathrm{term}}
\newcommand\Phist{\Phi_{\mathrm{st}}}
\newcommand\Idxstand[1]{\Idx\klt{#1,\Phist}}
\newcommand\lcm[1]{\operatorname{lcm}\paren{#1}}
\DeclareMathOperator\trace{tr}
\DeclareMathOperator\Coeff{Coeff}
\newcommand\sset[2]{\set{#1|#2}}
\newcommand\Sset[2]{\Set{#1|#2}}
\crefname{section}{Section}{Sections}
\crefname{subsection}{Section}{Sections}
\newtheorem{thm}{Theorem}[section]
\crefname{thm}{Theorem}{Theorems}
\newtheorem{prop}[thm]{Proposition}
\crefname{prop}{Proposition}{Propositions}
\newtheorem{lem}[thm]{Lemma}
\crefname{lem}{Lemma}{Lemmas}
\newtheorem{cor}[thm]{Corollary}
\crefname{cor}{Corollary}{Corollaries}
\newtheorem{conj}[thm]{Conjecture}
\crefname{conj}{Conjecture}{Conjectures}
\newtheorem{ques}[thm]{Question}
\crefname{ques}{Question}{Questions}
\theoremstyle{definition}
\newtheorem{defi}[thm]{Definition}
\crefname{defi}{Definition}{Definitions}
\crefname{exa}{Example}{Examples}
\crefname{claim}{Claim}{Claims}
\newtheorem*{ack}{Acknowledgements}
\theoremstyle{remark}
\newtheorem{rem}[thm]{Remark}
\crefname{rem}{Remark}{Remarks}
\title{Relations between indices of Calabi--Yau varieties and pairs}
\author{Yuto Masamura}
\address{Graduate School of Mathematical Sciences, the University of Tokyo, 3-8-1 Komaba, Meguro-ku, Tokyo 153-8914, Japan}
\email{masamura@ms.u-tokyo.ac.jp}
\subjclass[2020]{14J32 (Primary) 14E30, 14E22 (Secondary)}
\keywords{Calabi--Yau varieties, boundedness, index conjecture}
\begin{document}

\begin{abstract}
	We show that for any smooth Calabi--Yau variety,
	its index can be realized as the index of a Kawamata log terminal (klt) Calabi--Yau pair of lower dimension with standard coefficients.
	Our approach is based on an inductive argument on the dimension
	using the Beauville--Bogomolov decomposition.
	A key step in the argument is to prove that for $n\ge3$,
	any positive integer $m$ satisfying $\varphi(m)\le 2n$
	can be realized as the index of a klt Calabi--Yau pair of dimension $n-1$.
\end{abstract}

\maketitle

\tableofcontents

\section{Introduction}

We work over $\Comp$, the field of complex numbers.

Calabi--Yau pairs play a fundamental role in the minimal model program.
This paper focuses on the indices of Calabi--Yau pairs.
The \newterm{index} of a Calabi--Yau pair $\paren{X,B}$ is defined as the smallest integer $m\ge1$ such that $m\paren{K_X+B}\lineq0$.
It is expected that, with fixed dimension of $X$ and coefficients of $B$, and under suitable assumptions on singularities, the indices of Calabi--Yau pairs $\paren{X,B}$ are bounded (cf.~\cite[Conjecture 1.1]{xu}):

\begin{conj}[Index conjecture for Calabi--Yau pairs]
	\label{index conj}
	Let $n\ge1$ be an integer and $\Phi\subseteq[0,1]\cap\Quot$ be a finite set.
	Then there exists an integer $m\ge1$,
	depending only on $n$ and $\Phi$,
	satisfying the following:
	if $\paren{X,B}$ is an lc Calabi--Yau pair of dimension $n$ with $\Coeff\paren B\subseteq\Phi$, then $m\paren{K_X+B}\lineq0$.
\end{conj}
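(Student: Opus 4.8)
The plan is to reduce the conjecture to a \emph{boundedness} statement and then to extract the uniform index $m$ from boundedness by a formal argument. Concretely, suppose one knows that the lc Calabi--Yau pairs $\paren{X,B}$ of dimension $n$ with $\Coeff\paren B\subseteq\Phi$ form a log bounded family, i.e.\ fit into finitely many algebraic families over bases of finite type. Then, stratifying each base into finitely many locally closed pieces, I would use that on each piece the order of $K_X+B$ as a torsion element of the Weil divisor class group is bounded by a constant depending only on the family; taking the least common multiple over the finitely many strata produces a single $m=m\paren{n,\Phi}$ with $m\paren{K_X+B}\lineq0$ for every member. This step is essentially formal once boundedness is in hand, so the entire weight of the conjecture rests on establishing boundedness.

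To produce boundedness I would work by induction on dimension, reducing the lc case to the klt case and the klt case to known structural results. Given an lc pair $\paren{X,B}$, passing to a dlt modification and running adjunction onto the lc centers expresses the behaviour of $K_X+B$ in terms of klt Calabi--Yau pairs of strictly smaller dimension, together with a discriminant-and-moduli contribution governed by the canonical bundle formula; this is the dimensional descent that the paper's abstract realizes in the smooth setting through the Beauville--Bogomolov decomposition. For the klt pairs themselves I would invoke boundedness results of Borisov--Alexeev--Borisov type together with Birkar's theorems on boundedness of complements: with coefficients drawn from the fixed finite (hence DCC) set $\Phi$ and singularities controlled, one expects the resulting klt Calabi--Yau pairs to be log bounded, whence their indices are bounded and can be fed back into the induction.

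The hard part will be the boundedness step, which is where the real difficulty of the conjecture is concentrated and which remains open in full generality. Two technical points would demand particular care. First, adjunction does not preserve the coefficient set: the different produced on an lc center can have coefficients outside $\Phi$, lying only in the larger set $\Phist$ of standard coefficients, so the induction must be set up with a coefficient set closed under adjunction rather than with $\Phi$ itself, and one must check that $\Phist$ still satisfies the DCC hypotheses needed for boundedness. Second, the moduli part of the canonical bundle formula need not be semiample with controlled denominators a priori, so bounding the contribution of the fibration---and in particular bounding the denominators of the discriminant divisor---is a genuine obstacle, most likely to be addressed only by restricting to cases where the relevant boundedness or B-semiampleness input is already available.
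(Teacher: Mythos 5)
There is a genuine gap, and the most important point to recognize is that this statement is \cref{index conj}, an \emph{open conjecture}: the paper never proves it, and only cites the known low-dimensional cases (dimension $\le 3$ for lc pairs, and related slc and $4$-fold results) from the literature. The paper's actual contributions are of a different nature --- relations between index sets such as $\Idx\sm n\subseteq\Idxstand{n-1}\subseteq\Idx\term n$ in \cref{intro main thm}, and the reduction \cref{xu step} quoted from Xu. Your proposal is, by your own admission, a conditional reduction (``the hard part will be the boundedness step\dots which remains open''), so it is not a proof of the statement at all.

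Beyond that, the central hypothesis you reduce to is not merely open but \emph{false} as stated: lc (or even smooth) Calabi--Yau pairs of fixed dimension $n\ge 2$ with $B=0$ do not form a log bounded family. K3 surfaces already give a counterexample --- a bounded family admits, after stratification, a relatively ample line bundle whose fiberwise self-intersection is bounded, whereas there exist K3 surfaces of Picard rank $1$ whose minimal polarization degree is arbitrarily large; abelian varieties behave the same way. Yet all of these have index $1$, which shows that boundedness of the pairs themselves cannot be the intermediate statement through which the index conjecture is proved; the known cases (Prokhorov--Shokurov in dimension $2$, Kawamata, Morrison, Jiang, Xu in dimension $3$) proceed by classification, Riemann--Roch, and adjunction-type inductions instead. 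Relatedly, your appeal to Borisov--Alexeev--Borisov and to boundedness of complements is misplaced here: those results concern Fano-type varieties and do not apply to klt Calabi--Yau varieties with $B=0$, which is exactly the case to which \cref{xu step} reduces the problem and which carries the real difficulty. The dlt-modification/adjunction descent you describe for the non-klt case does parallel the inductive steps of Xu and Jiang--Liu cited in the paper, but it leaves the klt-variety core untouched.
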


Note that the index conjecture is also formulated for slc pairs, a broader class of pairs, as indicated in \cite[Conjecture 1.3]{xu}, \cite[Conjecture 1.5]{jiang-liu}.
The index conjecture was studied in previous works such as \cite{kawamata,morrison,prokhorov-shokurov,hacon-mckernan-xu,jiang,xu-comp,xu,jiang-liu}.
Specifically, it was proven in dimension $2$ by Prokhorov and Shokurov \cite[Corollary 1.11]{prokhorov-shokurov}, for terminal $3$-folds by Kawamata \cite[Theorem 3.2]{kawamata} and Morrison \cite[Corollary]{morrison}, for Kawamata log terminal (klt) $3$-folds by Jiang \cite[Corollary 1.7]{jiang}, for lc pairs of dimension $3$ by Xu \cite[Theorem 1.13]{xu}, and for slc pairs of dimension $3$ and non-klt (lc) pairs of dimension $4$ by Jiang and Liu \cite[Corollaries 1.6, 1.7]{jiang-liu}.
Hacon, McKernan and Xu \cite[Theorem 1.5]{hacon-mckernan-xu} showed that \cref{index conj} is equivalent to the conjecture for $\Phi$ satisfying only the descending chain condition.

Inductive steps towards the index conjecture have been established, see \cite[Theorems 1.7, 1.8, 1.11, 1.12]{xu} and \cite[Propositions 4.7, 4.8]{jiang-liu}.
In this paper we restrict our attention to the index conjecture for \textit{klt} pairs.
Xu showed that the index conjecture for klt pairs can be reduced to that for klt varieties:
 
 \begin{thm}[{\cite[Theorem 1.8]{xu}}]
 	\label{xu step}
 	Let $n\ge1$ be an integer.
	Assume that \cref{index conj} holds true for klt varieties $\paren{X,0}$
	of dimension $\le n$.
	Then \cref{index conj} holds true for klt pairs in dimension $n$.
 \end{thm}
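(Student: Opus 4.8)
The plan is to bound the index of a klt Calabi--Yau pair by dominating it with a klt Calabi--Yau \emph{variety} of dimension $\le n$, to which the hypothesis applies, through a finite cover whose degree is controlled only by $\Phi$. Fix $n$ and a finite set $\Phi\subseteq[0,1]\cap\Quot$, and let $\paren{X,B}$ be a klt Calabi--Yau pair of dimension $n$ with $\Coeff\paren B\subseteq\Phi$. Since $K_X+B\lineqQ0$, the class $[K_X+B]$ is a torsion element of the divisor class group $\operatorname{Cl}\paren X$, and the index of $\paren{X,B}$ is exactly its order; the goal is to bound this order uniformly. I may assume $X$ is $\Quot$-factorial after a small modification, which changes neither the dimension nor the index.

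The engine of the argument is Kawamata's covering construction, which works cleanly when all coefficients are \emph{standard}, i.e.\ $b_i=1-1/m_i$. In that case one builds a finite surjective morphism $\pi\colon Y\to X$ from a normal projective variety $Y$ of dimension $n$, of degree bounded in terms of the $m_i$, ramified to order $m_i$ along each component $B_i$ and étale in codimension one over $X\setminus\Supp B$. The ramification formula then reads
\[
	K_Y=\pi^\ast K_X+\textstyle\sum_i\paren{m_i-1}\bar B_i=\pi^\ast\Paren{K_X+\textstyle\sum_i\paren{1-1/m_i}B_i}=\pi^\ast\paren{K_X+B},
\]
so $K_Y\lineqQ0$ and $\paren{Y,0}$ is a klt Calabi--Yau \emph{variety} of dimension $n\le n$. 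By hypothesis its index $M$ is bounded in terms of $n$. For the bookkeeping, $MK_Y=\pi^\ast\paren{M\paren{K_X+B}}\lineq0$, and applying $\pi_\ast$ together with $\pi_\ast\pi^\ast=\deg\pi$ on $\operatorname{Cl}\paren X$ gives $\paren{\deg\pi}\cdot M\paren{K_X+B}\lineq0$. Hence the index of $\paren{X,B}$ divides $\paren{\deg\pi}\cdot M$, and since $\deg\pi$ is determined by the $m_i$ it is bounded in terms of $\Phi$; this settles the standard-coefficient case.

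The main obstacle is therefore the reduction to standard coefficients, since a single cover can trivialize the boundary along $B_i$ only when $1/\paren{1-b_i}$ is an integer, i.e.\ exactly when $b_i$ is standard. I would replace each $b_i$ by the largest standard coefficient $b_i^\flat\le b_i$ (whose denominator is bounded by $\Phi$) and set $B^\flat=\sum_i b_i^\flat B_i\le B$, so that $-\paren{K_X+B^\flat}\lineqQ B-B^\flat\ge0$ is effective and $\paren{X,B}$ is a $\Quot$-complement of the log Fano type pair $\paren{X,B^\flat}$. When $B-B^\flat$ is big one invokes Birkar's boundedness of complements to produce a standard-coefficient Calabi--Yau structure of bounded index and combines it with the covering engine; when it is not big, running the minimal model program yields a fibration $X\dashrightarrow Z$ with $\dim Z<n$, and the canonical bundle formula descends the problem to a klt Calabi--Yau pair on $Z$, where the hypothesis in dimension $<n$ applies --- this is the point at which the assumption ``dimension $\le n$'' (rather than $=n$) is used. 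The crux is to keep every multiplicative factor --- the covering degree, the complement index, and above all the index contribution of the moduli part in the canonical bundle formula --- bounded purely in terms of $n$ and $\Phi$, and to match the original boundary $B$ against the standard complement so that no unbounded torsion in $\operatorname{Cl}\paren X$ survives the process.
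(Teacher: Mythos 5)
Note first that the paper does not prove this statement at all: it is imported verbatim from \cite[Theorem 1.8]{xu}, so there is no internal proof to compare against, and your proposal has to be judged on its own. Judged that way, it has a fatal gap at its core. The ``covering engine'' you rely on does not exist. Kawamata's covering construction does \emph{not} produce a cover that is \'etale in codimension one over $X\setminus\Supp B$: it is assembled from cyclic covers branched along the $B_i$ \emph{plus} general members of auxiliary very ample linear systems, so the resulting $Y$ ramifies along those auxiliary divisors too, and $K_Y$ is the pullback of $K_X+B+\paren{\text{auxiliary ramification terms}}$, which is big rather than $\Quot$-linearly trivial. A cover ramified to order exactly $m_i$ along each $B_i$ and \'etale in codimension one elsewhere requires divisibility in the class group (already a single cyclic cover of degree $m_1$ branched exactly along $B_1$ needs $B_1\lineq m_1L$ for some Weil divisor $L$), and this fails in general. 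The only such cover that always exists for a klt Calabi--Yau pair with standard coefficients is the index-one cover \cite[Corollary 2.51]{kollar} --- the one this paper itself uses in \cref{klt st subseteq term,divisor of index} --- but its degree \emph{equals} the index of $\paren{X,B}$, which is precisely the quantity to be bounded. In fact, combining your own norm/pushforward computation (in one direction) with the index-one cover (in the other), the existence of a cover with your stated properties and degree bounded in terms of $n$ and $\Phi$ is \emph{equivalent} to the conclusion of the theorem; postulating it begs the question.

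The second half, reducing general $\Phi$ to standard coefficients, is where the actual content of Xu's theorem lives, and your sketch leaves all of it open. On the branch where the boundary is big, there is a cleaner route than the one you describe: if $B$ is big and $\paren{X,B}$ is klt with $K_X+B\lineqQ0$, then $X$ is of Fano type, and Birkar's boundedness of complements applied directly to $\paren{X,B}$ (its coefficients lie in a hyperstandard set containing $\Phi$) gives $N$ depending only on $n,\Phi$ and a complement $B^+\ge B$ with $N\paren{K_X+B^+}\lineq0$; since $B^+-B\ge0$ and $B^+-B\lineqQ0$, one gets $B^+=B$, so the index divides $N$ --- no cover and no hypothesis on klt varieties is needed here, and your detour through the rounded-down boundary $B^\flat$ only creates the ``matching'' problem you worry about at the end (note also that your dichotomy is on bigness of $B-B^\flat$, which is the wrong divisor: it can fail to be big while $B$ is big). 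The genuinely hard branch is when $B$ is not big: the MMP then ends with a Mori fiber space, and the canonical bundle formula equips the lower-dimensional base with a \emph{generalized} pair, so to transfer index bounds one must bound the torsion of the moduli part (effective b-semiampleness), control the coefficient set of the discriminant part, and organize the induction so that the hypothesis --- which concerns klt \emph{varieties}, not pairs --- is actually invoked somewhere. You explicitly defer all of this (``the crux is to keep every multiplicative factor bounded''), so the proposal is a plan whose unresolved steps are exactly the theorem's content.
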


Thus, when proving the index conjecture for klt pairs by induction on the dimension $n$,
it is crucial to consider the following question:

\begin{ques}
	\label{vague q}
	Is there a relationship between the indices of klt Calabi--Yau varieties
	of dimension $n$ and those of klt Calabi--Yau pairs of dimension $<n$?
\end{ques}

Here, we introduce notations regarding the indices of Calabi--Yau pairs.
For $n\ge1$ and a subset $\Phi\subseteq[0,1]\cap\Quot$, we define $\Idx\klt{n,\Phi}$
as the set of the indices of klt Calabi--Yau pairs $\paren{X,B}$ of dimension $n$ with $\Coeff\paren B\subseteq\Phi$.
Now \cref{vague q} can be reformulated as follows:
Can we find a relationship between the set $\Idx\klt {n,\emptyset}$
and (a subset of) the set $\Idx\klt{n-1,\Phi}$ for some $\Phi$?

Moreover, 
we define $\Idx\sm n$ and $\Idx\term n$ as the sets of the indices
of $n$-dimensional smooth Calabi--Yau varieties and terminal Calabi--Yau varieties,
respectively.
We also set $\Idx\sm0=\Idx\term0=\Idx\klt{0,\Phi}=\finset1$ for any $\Phi$.
Define $\Phist$ to be the set of \newterm{standard coefficients},
which are rational numbers of the form $1-1/b$ for some $b\in\Zahlen_{\ge1}$.

There is limited knowledge on the indices of klt Calabi--Yau varieties.
However, we propose the following conjecture on the indices of terminal Calabi--Yau varieties:

\begin{conj}
	\label{index set conj}
	Let $n\ge1$ be an integer.
	Then it holds that
	\[
		\Idxstand{n-1}=\Idx\term n.
	\]
\end{conj}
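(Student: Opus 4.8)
The plan is to prove the two inclusions $\Idx\term n\subseteq\Idxstand{n-1}$ and $\Idxstand{n-1}\subseteq\Idx\term n$ separately, by induction on $n$ with the Beauville--Bogomolov decomposition as the engine of each step; the case $n=1$ holds because a terminal Calabi--Yau curve is an elliptic curve, so $\Idx\term1=\finset1=\Idxstand0$. The bridge between the two sides is the canonical cover: if $X$ is a terminal Calabi--Yau $n$-fold of index $m$, let $\pi\colon\tilde X\to X$ be the associated cyclic cover of degree $m$, so that $K_{\tilde X}\lineq0$, the group $\Zahlen/m$ acts on $\tilde X$ quasi-\'etale (\'etale in codimension one), and a generator $g$ acts on the line $H^0\paren{\tilde X,K_{\tilde X}}$ by a primitive $m$-th root of unity $\zeta$.

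For the inclusion $\Idx\term n\subseteq\Idxstand{n-1}$, I would feed $\tilde X$ into the klt version of the Beauville--Bogomolov decomposition: after a finite quasi-\'etale cover to which the cyclic action lifts, $\tilde X$ becomes a product $A\times Z$ of an abelian variety $A$ with a product $Z$ of strict Calabi--Yau and irreducible holomorphic symplectic factors, the action respecting the splitting up to permutation of isomorphic factors. When $A$ has positive dimension I split off a one-dimensional subtorus $E\subseteq A$ on which $g$ acts by translation; writing $\tilde X=E\times\tilde S$ with $g=\paren{g_E,g'}$ and $g_E$ a translation, the projection $\tilde X\to\tilde S$ exhibits $X$ as an isotrivial elliptic fibration over $S:=\tilde S/\Angle{g'}$. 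Since a translation acts trivially on $H^0(K_E)$, the eigenvalue $\zeta$ sits entirely on $H^0\paren{\tilde S,K_{\tilde S}}$, so $g'$ has order $m$ and acts by the primitive $\zeta$; the codimension-one fixed loci of the powers of $g'$ then produce, via the ramification identity $K_{\tilde S}=\rho^\ast\paren{K_S+B}$ for $\rho\colon\tilde S\to S$, a boundary $B$ whose coefficients are exactly of the standard form $1-1/b$. Finally, the orbifold section correspondence identifies $k\paren{K_S+B}\lineq0$ with the $g'$-invariance of the $k$-th power of the holomorphic form, i.e.\ with $m\mid k$, so the index of $\paren{S,B}$ is again $m$.

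For the reverse inclusion $\Idxstand{n-1}\subseteq\Idx\term n$ I would invert this. Given a klt Calabi--Yau pair $\paren{S,B}$ of dimension $n-1$ with $\Coeff\paren B\subseteq\Phist$ and index $m$, the standard coefficients $1-1/b_i$ determine a cyclic orbifold cover $\rho\colon\tilde S\to S$, branched to order $b_i$ over the components of $B$, with $K_{\tilde S}\lineq0$ and a generator $g'$ of order $m$ acting on the form by $\zeta$. Crossing with an elliptic curve $E$ carrying a translation $t$ of order $m$ and setting $X=\paren{E\times\tilde S}/\Angle{\paren{t,g'}}$ yields an isotrivial elliptic fibration $X\to S$ with $K_X\lineqQ\rho^\ast\paren{K_S+B}\lineqQ0$, and tracking $\zeta$ shows that $m$ is the index of $X$. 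For the purely abelian part of the spectrum the existence of $\tilde S$ and $g'$ is exactly what the totient estimate announced in the abstract supplies: for $n\ge3$ every $m$ with $\varphi(m)\le 2n$ is realized this way, while the remaining indices, coming from strict Calabi--Yau or holomorphic symplectic factors (which need \emph{not} satisfy $\varphi(m)\le 2n$), are matched by transporting the identical factor from one side of the equality to the other.

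The main obstacle, and the reason the statement is stated only as a conjecture, is twofold. First, in the forward inclusion the decomposition must be invoked for the \emph{singular} cover $\tilde X$ through the recent klt decomposition theorems; making the cyclic action lift and respect the product, and controlling the fixed loci so that $\paren{S,B}$ is genuinely klt with the index preserved rather than merely divided, is delicate, and when $A$ has dimension zero the clean ``drop by one dimension'' along a translation is simply unavailable. Second, and more seriously, the reverse inclusion must produce a \emph{terminal} $n$-fold, not merely a klt one: the multiple fibres that create the standard coefficients tend to force canonical-but-not-terminal singularities on $X$, so one must arrange a crepant terminalization preserving both $K_X\lineqQ0$ and the exact value $m$. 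I expect this exact index-matching in the presence of the quotient singularities, together with the missing torus factor in the forward step, to be the hardest point.
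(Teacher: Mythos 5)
The statement you are addressing is \cref{index set conj}, which is a \emph{conjecture} in the paper: the paper does not prove it, and only establishes partial results, namely $\Idxstand{n-1}\subseteq\Idx\term n$ (\cref{klt st subseteq term}), the weaker inclusion $\Idx\sm n\subseteq\Idxstand{n-1}$ for \emph{smooth} rather than terminal varieties (\cref{index set thm}), and the equality in dimensions $n\le3$ (\cref{index set in low dimension}). Your proposal, as you yourself concede in the final paragraph, is also not a proof, but it is worth separating what in it is solid from what is genuinely broken. The reverse inclusion $\Idxstand{n-1}\subseteq\Idx\term n$ in your sketch is essentially the paper's proof of \cref{klt st subseteq term}: index-one cover of $\paren{S,B}$, product with an elliptic curve carrying a translation of order $m$, quotient by the diagonal $\mu_m$-action, then terminalize. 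The point you flag there as ``the hardest'' is in fact immediate: a terminalization is crepant, so it preserves $H^0$ of every multiple of the canonical divisor and hence the index; no delicate index-matching is needed.

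The genuine gap is the forward inclusion $\Idx\term n\subseteq\Idxstand{n-1}$, and it is precisely why the paper states equality only as a conjecture. For a terminal (singular) Calabi--Yau $X$, the canonical cover $\tilde X$ is quasi-\'etale over $X$ and hence again terminal but in general \emph{singular}, so the Beauville--Bogomolov decomposition (\cref{decomposition}) does not apply; one must invoke the singular decomposition theorems, and then the paper's key tool, the holomorphic Lefschetz formula (\cref{lefschetz for no fixed point}), which is a statement about smooth varieties, is unavailable, so the bound $\varphi\paren m\le2n$ of \cref{free aut phi<2n} is not known for actions on the singular strict Calabi--Yau and symplectic factors. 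Beyond this structural obstruction, two specific steps of your sketch fail: first, the generator $g$ acts on the abelian factor $A$ by a translation composed with a group automorphism, and there need not exist any one-dimensional $g$-stable subtorus $E\subseteq A$ on which $g$ acts by translation; moreover, by \cref{aut of tilde X}, $g$ may permute isomorphic factors, so it need not split as $\paren{g_E,g'}$ at all. Second, when $A$ is a point your argument has no content, and your closing remark that the remaining indices are ``matched by transporting the identical factor from one side of the equality to the other'' assumes the very equality being proven---it is circular. These are exactly the difficulties that force the paper to restrict its unconditional result to $\Idx\sm n$, where the cover $\tilde X$ is smooth and the Lefschetz argument applies.
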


Esser, Totaro and Wang studied the largest values of indices of Calabi--Yau pairs.
They expect that the largest values in both $\Idxstand{n-1}$ and $\Idx\term n$ are equal to $\paren{s_{n-1}-1}\paren{2s_{n-1}-3}$, where the sequence $s_n$ is defined by $s_0=2$ and $s_n=s_{n-1}\paren{s_{n-1}-1}+1$, see \cite[Conjectures 3.4, 4.2]{esser-totaro-wang-cal}.
\Cref{index set conj} is in accordance with their conjectures.

The main theorem of this paper is the following:

\begin{thm}[\cref{klt st subseteq term,index set in low dimension,index set thm}]
	\label{intro main thm}
	Let $n\ge1$ be an integer.
	Then it holds that
	\[
		\Idx\sm n\subseteq\Idxstand{n-1}\subseteq\Idx\term n.
	\]
	Moreover, $\Idx\sm n=\Idxstand{n-1}=\Idx\term n$ for $n\le3$.
\end{thm}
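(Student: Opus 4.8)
The statement bundles three assertions: the inclusions $\Idx\sm n\subseteq\Idxstand{n-1}$ and $\Idxstand{n-1}\subseteq\Idx\term n$, and the coincidence of all three sets when $n\le3$. The plan is to prove the two inclusions in general and, for the low-dimensional equalities, to close the resulting chain with the reverse inclusion $\Idx\term n\subseteq\Idx\sm n$.

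For $\Idxstand{n-1}\subseteq\Idx\term n$ I would argue by an explicit construction. Given a klt Calabi--Yau pair $(Y,B)$ of dimension $n-1$ with $\Coeff(B)\subseteq\Phist$ and index $m$, write $B=\sum_i(1-1/b_i)B_i$; the standard coefficient $1-1/b_i$ encodes an orbifold structure of multiplicity $b_i$ along $B_i$. I would then produce an elliptic fiber space $f\colon X\to Y$ with general fibre an elliptic curve and multiple fibres of multiplicity $b_i$ over $B_i$, built via a cyclic-cover / logarithmic-transform construction adapted to the equivalence $m(K_Y+B)\sim0$, so that the canonical bundle formula yields $K_X\sim f^*(K_Y+B)$. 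Then $mK_X\sim0$ and the index of $X$ is exactly $m$, while the construction keeps $X$ terminal; hence $m\in\Idx\term n$. When $(Y,B)$ happens to be a global quotient one may instead take a free diagonal quotient of the form $(\tilde Y\times E)/H$ and even obtain a smooth $X$; the reason the conclusion is only $\subseteq\Idx\term n$, rather than $\subseteq\Idx\sm n$, is exactly that such a global cover need not exist.

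The main inclusion $\Idx\sm n\subseteq\Idxstand{n-1}$ is where the real work lies, and I would prove it by induction on $n$ using the Beauville--Bogomolov decomposition. Let $X$ be a smooth Calabi--Yau $n$-fold of index $m$, and let $\hat X\to X$ be the degree-$m$ étale cyclic (index-one) cover, so that $K_{\hat X}\sim0$ and a generator $g$ of the deck group scales the holomorphic volume form by a primitive $m$-th root of unity. After a further finite étale cover, $\hat X\sim T\times W$ with $T$ an abelian variety and $W$ a product of strict Calabi--Yau and irreducible holomorphic symplectic manifolds; since the torus factor is the Albanese and the decomposition is canonical, $g$ acts compatibly with it. I would then separate two regimes. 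In the torus-dominated case, where the order $m$ is already detected on $H^1(\hat X,\Zahlen)\cong\Zahlen^{2\dim T}$, integrality of this action forces the cyclotomic polynomial $\Phi_m$ to divide the characteristic polynomial of $g$, so $\varphi(m)\le2\dim T\le2n$; for $n\ge3$ the key realization step then places $m\in\Idxstand{n-1}$. Otherwise a nontrivial part of the index is carried by a proper factor of dimension $<n$, which on projecting to a subproduct is realized by a smooth Calabi--Yau of smaller dimension; the inductive hypothesis, together with the monotonicity $\Idxstand{k-1}\subseteq\Idxstand{n-1}$ for $k\le n$ (obtained by multiplying pairs with abelian varieties), then finishes the step. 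The base cases $n\le2$ are handled directly.

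I expect this case analysis to be the principal obstacle: one must show that the contribution of each simply-connected Beauville--Bogomolov factor to the index is genuinely lower-dimensional and splits off, and that the freeness of the deck action --- via a Lefschetz argument forcing the torus factor to absorb all non-free behaviour --- is compatible with the splitting. The key realization step, though cited as an input, also needs its own construction: for a prime power $p^e$ I would realize the index by quotienting the abelian variety with complex multiplication by $\Quot(\zeta_{p^e})$ (of dimension $\varphi(p^e)/2$) by a suitable reflection, read off a boundary with standard coefficients, and then combine prime powers multiplicatively, using $\varphi(m)\le2n$ to keep the total dimension at most $n-1$. Finally, for $n\le3$ the chain of inclusions becomes a chain of equalities once $\Idx\term n\subseteq\Idx\sm n$ is verified: for $n\le2$ terminal Calabi--Yau varieties are smooth, and for $n=3$ I would appeal to the explicit index set of terminal Calabi--Yau $3$-folds together with the classification results cited in the introduction to check that every such index is attained by a smooth model.
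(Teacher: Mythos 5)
Your overall skeleton matches the paper's: the two inclusions are proved separately, the main one by induction on the Beauville--Bogomolov decomposition, and the equalities for $n\le3$ follow by closing the chain with classification results. But two of your steps have genuine gaps. The most serious is your proof of the key realization statement $\sset{m\in\Zahlen_{\ge1}}{\varphi(m)\le2n}\subseteq\Idxstand{n-1}$ (\cref{phi<2n and index}), which you propose to obtain from abelian varieties with complex multiplication by $\Quot(\zeta_{p^e})$. Such an abelian variety has dimension $\varphi(p^e)/2$, and any finite quotient of it (e.g.\ via \cref{quot and index}) is a klt pair of the \emph{same} dimension; quotients never lower dimension. So in the critical case $\varphi(m)=2n$ your construction only yields $m\in\Idxstand{n}$, one dimension too high. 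Concretely, for $n=3$ and $m=7$ you need $7\in\Idxstand2$, but the CM abelian threefold quotient gives a three-dimensional pair. This is precisely why the paper devotes a whole section to explicit weighted projective pairs: for instance $\wproj{3,2,1}$ with boundary of coefficient $\frac{m-1}m$ realizes an odd $m\equiv3\pmod4$ in dimension $(m+1)/4$ (\cref{index prime}), and \cref{index power of prime} handles prime powers $m^e$ in dimension $m+e-3$, thereby beating the bound $\varphi(m)/2$ at which any torus-quotient argument is stuck.

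Second, in the inductive step you name the interaction of the deck action with the splitting as ``the principal obstacle'' but do not resolve it, and your heuristic that the torus factor ``absorbs all non-free behaviour'' is not what happens. The paper's argument needs: (a) when $g=g_1\times g_2$ splits, a fixed-point dichotomy --- if $g_1$ has a fixed point, then freeness of $g$ forces $g_2$ to act freely, so $Z/\gengp{g_2}$ is a \emph{smooth} Calabi--Yau of dimension $n_2$ to which the induction hypothesis applies, while $Y_1/\gengp{g_1}$ contributes $m_1\in\Idxstand{n_1}$ by \cref{quot and index}, and the budget $n_1+(n_2-1)=n-1$ closes exactly; if instead $g_1$ is fixed-point free, the holomorphic Lefschetz formula (\cref{index of aut of no fixed points}) bounds $m_1$ and \cref{phi<2n and index} places it in $\Idxstand{n_1-1}$; (b) when $g$ does not split, it permutes isomorphic factors, and one needs \cref{aut of tilde X} together with the cycle computation showing that the index of $h\comp\sigma$ equals that of $h_1\dotsm h_e\in\Aut{Y_1}$, the dimension drop coming from $n_1+n_2<en_1+n_2=n$. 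Your sketch covers neither (a)'s dichotomy nor case (b). Finally, a smaller point on $\Idxstand{n-1}\subseteq\Idx\term n$: your claim that the global quotient presentation ``need not exist'' is wrong --- the index-one cover of a klt Calabi--Yau pair with standard coefficients always exists, and the paper's proof of \cref{klt st subseteq term} is exactly the quotient $(\tilde X\times E)/\mu_m$; the reason the conclusion is only terminal rather than smooth is that this cover, hence the quotient, is in general singular (canonical), so one must pass to a terminalization.
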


The second inclusion in \cref{intro main thm} shows that \cref{index set conj} is equivalent to the inclusion $\Idxstand{n-1}\supseteq\Idx\term n$.
The first inclusion, which is the core of the theorem, shows that the conjectural inclusion holds partially.

Since the set $\Idxstand3$ is bounded by \cite[Theorem 1.13]{xu} (and \cite[Theorem 1.5]{hacon-mckernan-xu}), we get the following corollary:

\begin{cor}
	\label{sm cy 4folds}
	The set $\Idx\sm4$ is bounded, that is, the indices of smooth Calabi--Yau varieties of dimension $4$ are bounded.
\end{cor}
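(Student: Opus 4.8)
The plan is to combine the first inclusion of \cref{intro main thm} with the resolution of the index conjecture in dimension three. First I would specialize \cref{intro main thm} to $n=4$, which gives the inclusion $\Idx\sm4\subseteq\Idxstand3$. This immediately reduces the statement to the boundedness of $\Idxstand3=\Idx\klt{3,\Phist}$, the set of indices of klt Calabi--Yau pairs of dimension three whose boundary has standard coefficients. In other words, it suffices to produce a single integer $m\ge1$ that annihilates $m\paren{K_X+B}$ (linearly) for every such pair.

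The one point requiring care is that $\Phist$ is an infinite set, so \cite[Theorem 1.13]{xu} does not apply verbatim. Here I would observe that $\Phist$ satisfies the descending chain condition: its elements $1-1/b$ form a strictly increasing sequence accumulating only at $1$, so there is no infinite strictly decreasing sequence in $\Phist$. Consequently, by the equivalence of Hacon--McKernan--Xu \cite[Theorem 1.5]{hacon-mckernan-xu}, \cref{index conj} for the DCC set $\Phist$ in dimension three follows from \cref{index conj} for finite coefficient sets in dimension three, the latter being exactly \cite[Theorem 1.13]{xu}. Thus there exists an integer $m\ge1$, depending only on $\Phist$, such that $m\paren{K_X+B}\lineq0$ for every klt Calabi--Yau pair $\paren{X,B}$ of dimension three with $\Coeff\paren B\subseteq\Phist$.

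It then follows that every element of $\Idxstand3$ divides this fixed $m$, so $\Idxstand3$ is bounded; combined with $\Idx\sm4\subseteq\Idxstand3$ from the first step, this yields the boundedness of $\Idx\sm4$. Since both ingredients are supplied by the main theorem and by already established results, the argument is essentially immediate. I do not expect a substantial obstacle: the only genuinely nontrivial verification is the DCC property of $\Phist$, which licenses the passage from finite to standard coefficients through the Hacon--McKernan--Xu reduction, and this I expect to be routine.
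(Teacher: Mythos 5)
Your proposal is correct and follows essentially the same route as the paper: the inclusion $\Idx\sm4\subseteq\Idxstand3$ from \cref{index set thm}, combined with the boundedness of $\Idxstand3$ obtained from \cite[Theorem 1.13]{xu} together with the Hacon--McKernan--Xu reduction \cite[Theorem 1.5]{hacon-mckernan-xu} to handle the infinite (but DCC) coefficient set $\Phist$. Your explicit verification of the DCC property of $\Phist$ simply spells out the step the paper leaves implicit in its parenthetical citation.
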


\begin{rem}
	Beauville found that \cref{sm cy 4folds} is true if the indices of automorphisms (of finite order) of strict Calabi--Yau $3$-folds are bounded, see \cite[Remarks following Proposition 8]{beauville}.
	We can prove \cref{sm cy 4folds} just by using his arguments and \cite[Theorem 1.13]{xu}, along with \cref{quot and index}.
\end{rem}

We give a sketch of the proof for the first inclusion in \cref{intro main thm}, using induction on $n$.
Let $X$ be a smooth Calabi--Yau variety of dimension $n$ with index $m$, and $f\colon\tilde X\to X$ be a Beauville--Bogomolov decomposition (see \cref{decomposition}).
Then $\tilde X$ is a product of strict Calabi--Yau varieties, holomorphic symplectic varieties, and an abelian variety.
If $\tilde X$ has at least $2$ components, then the induction hypothesis implies that $m\in\Idxstand{n-1}$.
If $\tilde X$ itself is either strict Calabi--Yau, holomorphic symplectic, or abelian, then we can see that $m$ satisfies $\varphi\paren m\le 2n$ (see \cref{free aut phi<2n}).
Here, $\varphi$ is the Euler function.
We conclude the proof by using the following second main theorem of this paper:

\begin{thm}[\cref{phi<2n and index}]
	\label{intro small index}
	Let $n\ge3$ be an integer.
	Then it holds that
	\[
		\sset{m\in\Zahlen_{\ge1}}{\varphi(m)\le2n}
		\subseteq\Idxstand{n-1}.
	\]
\end{thm}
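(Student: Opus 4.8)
The plan is to realize every such $m$ as the index of one explicit klt Calabi--Yau pair of dimension $n-1$ with standard coefficients, built on a weighted projective space. Writing $m=\prod_j p_j^{c_j}$, I would take a well-formed $\wproj{a_0,\dots,a_{n-1}}$ together with quasismooth hypersurfaces $D_j\in\vertbar{p_j^{c_j}H}$ in general position and form the pair $\bigl(\wproj{a_0,\dots,a_{n-1}},\ \sum_j(1-1/p_j^{c_j})D_j\bigr)$. The denominators $p_j^{c_j}$ are pairwise coprime with least common multiple $m$, and since the class group is $\Zahlen$ (torsion-free), an integral multiple $M(K+B)$ is linearly trivial exactly when it is an integral divisor, i.e.\ when $p_j^{c_j}\mid M$ for all $j$; hence the index is $\lcm{p_1^{c_1},\dots}=m$ independently of the degrees, as $1-1/p_j^{c_j}$ are the noninteger coefficients. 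The Calabi--Yau condition $K+B\lineqQ0$ amounts to the single equation $\sum_i a_i=\sum_j(p_j^{c_j}-1)$ (degree of $-K$ equals degree of $B$), which fixes the weight-sum; one then distributes it among the $n$ weights.

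The whole point is to choose the weights small and balanced, and this is where $\varphi$ enters. The weight-sum $\sum_j(p_j^{c_j}-1)$ is controlled by the prime-power parts of $m$, and because $\varphi$ is multiplicative, $\varphi(m)=\prod_j\varphi(p_j^{c_j})$; the hypothesis $\varphi(m)\le2n$ therefore bounds the prime powers simultaneously and keeps the weight-sum of order $n$, so that the weights can be taken bounded on average and the singularities of $\wproj{a}$ remain mild. (For intuition: when $\varphi(m)\le2n-2$ one can instead quotient a complex-multiplication abelian variety of dimension $\varphi(m)/2$ by $\langle\zeta_m\rangle$, whose index is the order of the induced character on $\omega$; this explains the shape of the bound, but the weighted pair is uniform and reaches the extremal value $\varphi(m)=2n$, where a Calabi--Yau \emph{variety} would need dimension $n$ while the \emph{pair} still fits into dimension $n-1$.)

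The hard part will be the klt verification. With the weights forced to be small, the equation $\sum_i a_i=\sum_j(p_j^{c_j}-1)$ pushes the $D_j$ through the cyclic quotient singularities of $\wproj{a}$, and log canonicity degenerates as the largest coefficient $1-1/p_j^{c_j}$ approaches $1$ and the weights grow. I expect the main work to be a lower bound, at each quotient singularity, for the relevant log canonical threshold (or discrepancy), with the inequality $\varphi(m)\le2n$ supplying precisely the margin that keeps the pair klt; the ancillary points are the existence of quasismooth $D_j$ in sufficiently general position and the well-formedness of the chosen weights. Finally, the restriction $n\ge3$ is genuinely needed and should be acknowledged: in dimension one the equation $\sum_i(1-1/b_i)=2$ forces the index of $(\projsp1,B)$ to lie in $\{2,3,4,6\}$ (the crystallographic restriction), so the statement already fails for $n=2$, and the dimension-saving represented by the extremal case only becomes available once $n-1\ge2$.
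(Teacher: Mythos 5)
Your design---one boundary component $D_j$ with coefficient $1-1/p_j^{c_j}$ for each prime-power factor of $m$, on a single well-formed weighted projective space of dimension $n-1$---has two genuine gaps, and the first is already fatal at the level of arithmetic. Your Calabi--Yau equation reads $\sum_{i=0}^{n-1}a_i=\sum_j\paren{p_j^{c_j}-1}$, with $n$ positive integer weights on the left, but the hypothesis $\varphi\paren m\le2n$ does not force the right-hand side to be $\ge n$. For example, take $m=210=2\cdot3\cdot5\cdot7$ and $n=24$: then $\varphi\paren m=48=2n$ (so this is an \emph{extremal} case, which cannot be deferred to induction on $n$), while $\sum_j\paren{p_j^{c_j}-1}=1+2+4+6=13<24$, so no choice of weights exists and the ``uniform'' pair simply does not exist. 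The only rescue is to build a pair in some lower dimension and pad by products with elliptic curves (\cref{properties of Idx}), which is precisely the non-uniform inductive bookkeeping you were hoping to avoid; indeed the paper never constructs a single pair for composite $m$ at all, but factors $m=m_1m_2$, uses multiplicativity of $\varphi$, the induction hypothesis, and the product lemma to combine lower-dimensional pairs.

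The second gap is the one you flagged yourself---existence of quasismooth $D_j$ and the klt verification---but it is not an ancillary estimate to be supplied later: it is the entire content of the theorem, and your one-divisor-per-prime-power design makes it hardest exactly in the extremal case. Take $m=2n+1$ prime, so $\varphi\paren m=2n$: you need a single quasismooth hypersurface of degree $2n+1$ on an $(n-1)$-dimensional well-formed $\wproj{a_0,\dotsc,a_{n-1}}$ with $\sum_ia_i=2n$, carrying coefficient $\tfrac{2n}{2n+1}$, so close to $1$ that klt-ness essentially forces the affine cone over $D$ to be smooth away from the origin. Whether such weights exist for every $n$ is a nontrivial arithmetic problem, and the natural choice fails: for weights $\paren{1^{\paren{n-1}},n+1}$ every degree-$(2n+1)$ hypersurface has the form $\finset{x_{n-1}f_n+g_{2n+1}=0}$, whose cone is singular along the axis over $[0:\dotsb:0:1]$ and which has multiplicity $n$ in the chart at that point; since $\tfrac{2n}{2n+1}>\tfrac{n-1}{n}\ge\operatorname{lct}$, the pair is not even lc there. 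Note also that $\varphi\paren m\le2n$ never supplies a ``margin'' in any discrepancy estimate: in the paper it enters only through dimension counts. The paper's constructions sidestep both issues by a different design: for $m$ prime (\cref{index prime}) or a prime power (\cref{index power of prime}) the boundary consists of \emph{many coordinate hyperplanes} whose coefficients have denominator the full $m$ (or the powers $m^{i+1}$), plus one hypersurface of small degree, so that the affine cone's boundary is simple normal crossing away from the origin and klt-ness follows at once from \cref{klt for wps}; all remaining $m$ are then handled by the induction and product argument described above.
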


\begin{rem}
	The set $\sset{m\ge1}{\varphi(m)\le2n}$ is much smaller than $\Idxstand{n-1}$.
	Specifically, for $n=3$, we have
	$\Idxstand{2}=\sset{m\ge1}{\varphi(m)\le20}\setminus\finset{60}$
	by \cref{index set in low dimension}.
\end{rem}

\begin{ack}
	The author would like to thank his advisor, Professor Keiji Oguiso, for providing support and encouragement.
	The author is also grateful to Professor Chen Jiang for the helpful feedback on an earlier draft, and to Professor Burt Totaro for the invaluable guidance and detailed responses to inquiries.
	The author appreciates the referees for reading the manuscript carefully and offering useful suggestions.
\end{ack}

\section{Preliminaries}

\subsection{Pairs and singularities}

Throughout this paper, a \newterm{variety} is assumed to be irreducible and reduced.
A \newterm{divisor} (resp.\;\newterm{$\Quot$-divisor}) on a normal variety is a finite sum $\sum_id_iD_i$ of prime divisors $D_i$
with integral (resp.\;rational) coefficients $d_i$.
For two $\Quot$-divisors $D,D'$ on a normal variety, we write $D\lineq D'$ (resp.\;$D\lineqQ D'$) for linear (resp.\;$\Quot$-linear) equivalence.
A $\Quot$-divisor $D=\sum_id_iD_i$ is \newterm{effective}, denoted by $D\ge0$, if $d_i\ge0$ for each $i$.
We define $\Coeff(D)=\sset{d_i}{d_i\ne0}$ to be the set
of nonzero coefficients of a $\Quot$-divisor $D=\sum_id_iD_i$.
Define $\Phist=\sset{1-1/b}{b\in\Zahlen_{\ge1}}$ to be the set of \newterm{standard coefficients}.

A \newterm{pair} $\paren{X,B}$ consists of a normal variety $X$ and a $\Quot$-divisor $B\ge0$ such that $K_X+B$ is $\Quot$-Cartier.

A pair $\paren{X,B}$ is \newterm{Kawamata log terminal} (\newterm{klt} for short)
if for any log resolution $f\colon Y\to X$ with $K_Y+B_Y=f^*\paren{K_X+B}$,
the $\Quot$-divisor $B_Y$ has coefficients $<1$.
A normal variety $X$ is \newterm{terminal} if $K_X$ is $\Quot$-Cartier
and if for any resolution $f\colon Y\to X$ with $K_Y+B_Y=f^*K_X$,
the $\Quot$-divisor $B_Y$ has coefficients $<0$ and $\Supp B_Y=\Exc\paren f$.

\subsection{Calabi--Yau pairs}

A \newterm{Calabi--Yau pair} is defined as a projective pair $\paren{X,B}$ satisfying $K_X+B\lineqQ0$.
The \newterm{index} of a Calabi--Yau pair $\paren{X,B}$ is defined as
\[
	\idx{X,B}=\min\sset{m\in\Zahlen_{\ge1}}{m\paren{K_X+B}\lineq0}.
\]
If $B=0$, we say that $X$ is a \newterm{Calabi--Yau variety}, and write $\idx X=\idx{X,0}$.

For $n\ge1$ and $\Phi\subseteq[0,1]\cap\Quot$, we set
\begin{align*}
	\Idx\sm n
	&=\sset{\idx X}
		{\text{$X$ is a smooth Calabi--Yau variety of dimension $n$}},\\
	\Idx\term n
	&=\sset{\idx X}
		{\text{$X$ is a terminal Calabi--Yau variety of dimension $n$}},\\
	\Idx\klt{n,\Phi}
	&=\Sset{\idx{X,B}}
		{\genfrac{}{}{0pt}{}{\text{$\paren{X,B}$ is a klt Calabi--Yau pair}}
			{\text{of dimension $n$ with $\Coeff(B)\subseteq\Phi$}}}.
\end{align*}
We also set $\Idx\sm0=\Idx\term0=\Idx\klt{0,\Phi}=\finset1$.

\begin{lem}
	Let  $n\ge1$ be an integer and $\Phi\subseteq[0,1]\cap\Quot$ be a subset.
	Then it holds that
	\[
		\Idx\sm n\subseteq\Idx\term n\subseteq\Idx\klt{n,\Phi}.
	\]
\end{lem}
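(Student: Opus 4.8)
The plan is to prove the two inclusions separately, each by taking a Calabi--Yau object that realizes a given index and exhibiting it as an object in the next, more general, class \emph{without changing its index}. Since the index of $\paren{X,B}$ is by definition the smallest $m\ge1$ with $m\paren{K_X+B}\lineq0$, a quantity depending only on the pair and not on the singularity class we assign to it, it suffices to establish the corresponding inclusions at the level of objects.

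First I would treat $\Idx\sm n\subseteq\Idx\term n$. Let $X$ be a smooth Calabi--Yau variety of dimension $n$, so that $K_X$ is Cartier and $K_X\lineqQ0$. I would then recall the standard fact that a smooth variety is terminal: for any resolution $f\colon Y\to X$, writing $K_Y=f^*K_X+\sum_i a_iE_i$ with $E_i$ the exceptional divisors, all discrepancies satisfy $a_i\ge1>0$. Setting $B_Y=-\sum_i a_iE_i$ gives $K_Y+B_Y=f^*K_X$, the coefficients of $B_Y$ are $<0$, and $\Supp B_Y=\Exc\paren f$, which is exactly the defining condition for terminality. Hence $X$ is a terminal Calabi--Yau variety of dimension $n$ with the same index, so $\idx X\in\Idx\term n$.

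For $\Idx\term n\subseteq\Idx\klt{n,\Phi}$, I would start from a terminal Calabi--Yau variety $X$ of dimension $n$ and pass to the pair $\paren{X,0}$. This is a Calabi--Yau pair since $K_X+0=K_X\lineqQ0$, and its index is $\idx{X,0}=\idx X$. The coefficient condition $\Coeff\paren0\subseteq\Phi$ is automatic, because $\Coeff\paren0=\emptyset$ is contained in every subset $\Phi\subseteq[0,1]\cap\Quot$. For kltness, the key observation is that the terminal inequality is strictly stronger than the klt one: any log resolution $f\colon Y\to X$ of $\paren{X,0}$ is in particular a resolution, so the terminal condition applies and, since $B=0$ forces $B_Y$ to be supported only on $\Exc\paren f$, all coefficients of $B_Y$ are negative, hence $<1$. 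Thus $\paren{X,0}$ is a klt Calabi--Yau pair with $\Coeff\paren0\subseteq\Phi$, giving $\idx X\in\Idx\klt{n,\Phi}$.

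I do not expect a genuine obstacle here, as the statement is essentially a comparison of the defining singularity conditions. The only points requiring care are bookkeeping: checking that passing to $\paren{X,0}$ preserves the index exactly, and observing that $\emptyset$ is contained in an arbitrary $\Phi$, so that the second inclusion holds for \emph{every} choice of $\Phi$ rather than only for those containing $\Phist$. The restriction $n\ge1$ in the statement simply sidesteps the degenerate conventions fixed for $n=0$.
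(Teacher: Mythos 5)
Your proof is correct, and it is exactly the argument the paper has in mind: the lemma is stated without proof there because it follows directly from the definitions via the chain smooth $\Rightarrow$ terminal $\Rightarrow$ klt (discrepancies $\ge1$ over a smooth variety, hence coefficients of $B_Y$ negative, hence $<1$), together with the observations that $\Coeff\paren0=\emptyset\subseteq\Phi$ and that the index of $\paren{X,0}$ equals that of $X$. Your write-up simply makes these routine verifications explicit.
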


\begin{lem}
	\label{properties of Idx}
	Let $\Phi\subseteq[0,1]\cap\Quot$.
	For $n\ge1$, let $I(n)$ denote one of $\Idx\sm n$, $\Idx\term n$, or $\Idx\klt{n,\Phi}$.
	\begin{enumerate}
		\item $\Idx{}n\subseteq\Idx{}{n+1}$.
		\item If $m\in\Idx{}n$ and $m'\in\Idx{}{n'}$, then $\lcm{m,m'}\in\Idx{}{n+n'}$.
		\end{enumerate}
\end{lem}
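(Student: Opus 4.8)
The plan is to deduce both statements from a single construction: the product of the two Calabi--Yau pairs, whose index turns out to be the least common multiple of the indices of the factors. I would prove part (2) first and then obtain part (1) as the special case of multiplying by an elliptic curve. So let $m=\idx{X,B}\in\Idx{}n$ and $m'=\idx{X',B'}\in\Idx{}{n'}$, where $\paren{X,B}$ and $\paren{X',B'}$ are Calabi--Yau pairs of the relevant type (taking $B=B'=0$ in the smooth and terminal cases). Set $Z=X\times X'$ with projections $p_1\colon Z\to X$ and $p_2\colon Z\to X'$, and put $\Delta=p_1^*B+p_2^*B'$. Over $\Comp$ the product of irreducible projective varieties is again an irreducible projective variety, so $Z$ is a variety of dimension $n+n'$; and since the prime components of $p_1^*B$ and $p_2^*B'$ are pairwise distinct, $\Coeff\paren\Delta=\Coeff\paren B\cup\Coeff\paren{B'}\subseteq\Phi$.

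Next I would confirm that $\paren{Z,\Delta}$ lies in the class defining $\Idx{}{n+n'}$. This rests on the product formula $K_Z=p_1^*K_X+p_2^*K_{X'}$ together with the standard fact that smoothness, terminality, and kltness are all preserved under products; for pairs one has that $\paren{Z,\Delta}$ is klt exactly when both $\paren{X,B}$ and $\paren{X',B'}$ are, and likewise for terminal varieties. In particular $K_Z+\Delta=p_1^*\paren{K_X+B}+p_2^*\paren{K_{X'}+B'}$ is $\Quot$-Cartier, so $\paren{Z,\Delta}$ is a genuine pair.

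The core is the identity $\idx{Z,\Delta}=\lcm{m,m'}$. Writing $D=K_X+B$ and $D'=K_{X'}+B'$, we have $k\paren{K_Z+\Delta}=p_1^*\paren{kD}+p_2^*\paren{kD'}$ for every $k\ge1$. With $L=\lcm{m,m'}$, both $m\mid L$ and $m'\mid L$ give $LD\lineq0$ and $LD'\lineq0$, whence $L\paren{K_Z+\Delta}\lineq0$. For the reverse divisibility, suppose $k\paren{K_Z+\Delta}\lineq0$ for some $k\ge1$; then this divisor is integral and principal, hence Cartier, and I would restrict it along the closed immersion $\iota_{x'}\colon X\cong X\times\finset{x'}\hookrightarrow Z$ for a point $x'\in X'$. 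Since $p_1\comp\iota_{x'}=\id_X$ while $p_2\comp\iota_{x'}$ is constant, pullback of divisor classes sends $k\paren{K_Z+\Delta}$ to $kD$, so $kD\lineq0$ and $m\mid k$; restricting instead along $\finset x\times X'$ gives $m'\mid k$. Hence $L\mid k$, so $\idx{Z,\Delta}=L$, which proves part (2).

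Part (1) is then immediate: an elliptic curve $E$ (with zero boundary) is a smooth---hence terminal and klt---Calabi--Yau variety of dimension $1$ with $\idx E=1$ and $\Coeff\paren0=\emptyset\subseteq\Phi$, so $1\in\Idx{}1$ in all three cases. Applying part (2) with $\paren{X',B'}=\paren{E,0}$ and $n'=1$ yields $\lcm{m,1}=m\in\Idx{}{n+1}$. The step I expect to require the most care is the reverse divisibility in the index computation: one must rule out that a proper divisor of $\lcm{m,m'}$ trivializes $K_Z+\Delta$, and the fibre-restriction of the (Cartier, since principal) divisor is exactly what forces $m\mid k$ and $m'\mid k$ separately. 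The other point needing justification, though entirely standard, is the stability of klt and terminal singularities under products.
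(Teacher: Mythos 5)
Your proposal is correct and takes essentially the same approach as the paper's proof: both rest on the product construction, with part (1) coming from a product with an elliptic curve of index $1$. The differences are purely organizational---you derive (1) as a special case of (2) instead of proving both directly---and you fill in details the paper merely asserts (the identity $\idx{Z,\Delta}=\lcm{m,m'}$, where in the reverse-divisibility step the point $x'$ should be taken \emph{general} so that the fiber $X\times\finset{x'}$ avoids $\Supp B'$ and the restriction of the principal divisor is the divisor of the restricted function).
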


\begin{proof}
	(1) Let $(X,B)$ be a Calabi--Yau pair with index $m$.
		If we take $I(n)=\Idx\klt{n,\Phi}$, then we assume that $(X,B)$ is klt with coefficients in $\Phi$; if $I(n)=I_\term(n)$, we assume that $X$ is terminal and $B=0$; and if $I(n)=I_\sm(n)$, we assume that $X$ is smooth and $B=0$.
		Let $E$ be an elliptic curve.
		Then the product $(X\times E,B\times E)$ is a Calabi--Yau pair with index $m$, and it has the same coefficients and singularity type as $(X,B)$.
	
	(2) Let $(X,B)$, $(X',B')$ be Calabi--Yau pairs with index $m,m'$ respectively.
		Then the product $(X\times X', B\times X'+X\times B')$ is a Calabi--Yau pair
		with index $\lcm{m,m'}$.
\end{proof}

\subsection{Automorphisms of varieties with trivial canonical divisors}

Let $X$ be a Calabi--Yau variety with $K_X\lineq 0$, and let $G\subseteq\Aut X$ be a finite subgroup.
Then $G$ acts on the $1$-dimensional space $H^0\paren{X,K_X}$, so a group homomorphism $\alpha\colon G\to\Aut{H^0\paren{X,K_X}}\simeq\Comp^\times$ is induced.
The image of $\alpha$ is a finite cyclic group, and its order $\order{\Image\alpha}$ is called the \newterm{index} of $G$.
For an automorphism $g\in\Aut X$ of finite order, the \newterm{index} of $g$ is defined as the index of the group $\gengp g$.
In other words, if $g$ is an automorphism of $X$ of finite order with index $m$, then
\[
	g^*\omega=\zeta_m\omega\quad\text{for $\omega\in H^0\paren{X,K_X}$,}
\]
where $\zeta_m$ is a primitive $m$-th root of unity.
Note that the index of $G$ is the largest index of its elements.

Let $X$ be a normal variety, and let $G\subseteq\Aut X$ be a finite subgroup.
Let $Y=X/G$ be the quotient variety with quotient morphism $f\colon X\to Y$.
Note that $f$ is Galois.
Then by the Hurwitz formula \cite[(2.41.6)]{kollar}, there exists a (canonically determined) $\Quot$-divisor $B\ge0$ with standard coefficients such that $K_X\lineq_\Quot f^*\paren{K_Y+B}$.
We write $X/G=\paren{Y,B}$ for this situation.

\begin{prop}
	\label{quot and index}
	Let $X$ be a klt Calabi--Yau variety with $K_X\lineq0$, and let $G\subseteq\Aut X$ be a finite subgroup with index $m$.
	Then the quotient $X/G=\paren{Y,B}$ is a klt Calabi--Yau pair with standard coefficients and index $m$.
\end{prop}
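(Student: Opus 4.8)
The plan is to verify the three assertions in turn: that $\paren{Y,B}$ is klt, that it is Calabi--Yau, and that its index equals $m$ (the standard coefficients being already built into the divisor $B$ furnished by the Hurwitz formula). The first two are routine. Since $\paren{X,0}$ is klt and $K_X\lineqQ f^*\paren{K_Y+B}$ is precisely the log pullback for the finite Galois morphism $f$, the klt property descends from $X$ to $\paren{Y,B}$ by the theory of crepant finite morphisms (cf.~\cite{kollar}); this also guarantees that $K_Y+B$ is $\Quot$-Cartier, so that $\paren{Y,B}$ is a genuine pair. For the Calabi--Yau property, note that $f^*\paren{K_Y+B}\lineqQ K_X\lineq0$; pushing forward and using the projection formula gives $\deg\paren f\paren{K_Y+B}\lineqQ0$, hence $K_Y+B\lineqQ0$.

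The heart of the statement is the index computation. Since $K_X\lineq0$, the space $H^0\Paren{X,\mathcal O_X\paren{K_X}}$ is one-dimensional; I would fix a generator $\omega$, regard it as a nowhere-vanishing rational canonical form, and recall that $g^*\omega=\alpha\paren g\omega$ with $\Image\alpha$ cyclic of order $m$. The goal is then to prove that for every integer $\ell\ge1$,
\[
	\ell\paren{K_Y+B}\lineq0
	\iff \omega^{\otimes\ell}\text{ is }G\text{-invariant}
	\iff m\mid\ell,
\]
whence $\idx{Y,B}=\min\sset{\ell\ge1}{m\mid\ell}=m$. The second equivalence is immediate, as $\Image\alpha$ is cyclic of order $m$; note also that the case $\ell=m$ already reproves $m\paren{K_Y+B}\lineq0$, giving the Calabi--Yau property a second time.

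The first equivalence rests on descending pluricanonical forms through $f$. Away from the branch divisor $f$ is étale, so a $G$-invariant form descends and pulls back without correction, and the only issue is the behaviour along each ramification divisor $\tilde D_i\subseteq X$ lying over a component $D_i\subseteq\Supp B$ with ramification index $e_i$ and boundary coefficient $1-1/e_i$. Working in a transverse local coordinate $t$ on $Y$ with $t=x^{e_i}$ on $X$, a direct Jacobian computation shows that a rational $\ell$-pluricanonical form on $Y$ with a pole of order exactly $\ell\paren{1-1/e_i}$ along $D_i$ pulls back to a form that is nowhere vanishing along $\tilde D_i$, and conversely. Hence a trivializing section of $\mathcal O_Y\Paren{\ell\paren{K_Y+B}}$ pulls back to a $G$-invariant nowhere-vanishing section of $\mathcal O_X\paren{\ell K_X}\cong\mathcal O_X$, which must be a scalar multiple of $\omega^{\otimes\ell}$, forcing $\omega^{\otimes\ell}$ to be $G$-invariant; in the reverse direction a $G$-invariant $\omega^{\otimes\ell}$ descends to a trivializing section of $\mathcal O_Y\Paren{\ell\paren{K_Y+B}}$, once one knows $\ell B$ is integral, i.e.\ $e_i\mid\ell$.

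I expect the local analysis at the branch locus to be the main obstacle. Concretely, one must make the descent of pluricanonical forms rigorous in the presence of quotient singularities, so that one genuinely manipulates reflexive sheaves and Weil-divisor classes on $Y$ rather than line bundles, and one must pin down that the inertia group of a general point of $\tilde D_i$ is cyclic of order $e_i$ and acts on $\omega$ through a primitive $e_i$-th root of unity. This last point is exactly what supplies the needed integrality: it yields $e_i\mid m$, so that $m\mid\ell$ forces $e_i\mid\ell$ and $\ell B$ integral. Once these local facts are established, the three displayed equivalences follow formally and the identity $\idx{Y,B}=m$ drops out.
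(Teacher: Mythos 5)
Your proposal is correct, and its skeleton matches the paper's: the klt property, the standard coefficients, and the Calabi--Yau property are quoted from Koll\'ar's results on quotients (the paper cites the Hurwitz formula and \cite[Corollary 2.43]{kollar}; your projection-formula derivation of $K_Y+B\lineqQ0$ is a harmless variant), and the index is computed by matching sections of multiples of $K_Y+B$ downstairs with $G$-invariant pluricanonical sections upstairs. The difference lies in how that matching is carried out. The paper invokes, in one line, the identity $H^0(X,lK_X)^G=H^0\Paren{Y,\rounddown{l\paren{K_Y+B}}}$ for all $l\ge0$; the round-down makes this uniform in $l$ with no integrality condition on $lB$, and then $l\paren{K_Y+B}\lineq0$ holds iff the right-hand side is nonzero iff $m\mid l$, since $H^0(X,lK_X)=\Comp\cdot\omega^{\otimes l}$. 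You instead prove the needed correspondence by hand: the local Jacobian computation at the ramification divisors, together with the observation that the inertia group of a general point of $\tilde D_i$ acts on $\omega$ through a primitive $e_i$-th root of unity, whence $e_i\mid m$ (every element of the cyclic group $\Image\alpha$ has order dividing $m$); this is exactly what guarantees that $\ell B$ is integral once $m\mid\ell$, so your equivalences do close up. Both routes are sound: the paper's is shorter because the round-down identity absorbs all the fractional-part bookkeeping, while yours is more self-contained and makes explicit why the coefficients $1-1/e_i$ are forced to be compatible with the index.
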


\begin{proof}
	The pair $\paren{Y,B}$ is Calabi--Yau with standard coefficients by \cite[(2.41.6)]{kollar}.
	Furthermore, it is klt by \cite[Corollary 2.43]{kollar}.
	It remains to show that $(Y,B)$ has index $m$.
	Since for any integer $l\ge0$, we have $H^0(X,lK_X)^G=H^0(Y,\rounddown{l(K_Y+B)})$, it follows that $l(K_Y+B)\lineq0$ if and only if $H^0(Y,\rounddown{l(K_Y+B)})\ne0$, which is equivalent to $H^0(X,lK_X)^G\ne0$, and hence $m\mid l$.
	Therefore, we conclude that $(Y,B)$ has index $m$.
\end{proof}

\begin{lem}
	\label{divisor of index}
	Let $n\ge1$ be an integer.
	If $m\in\Idxstand n$ and $m_0\mid m$, then $m_0\in\Idxstand n$.
\end{lem}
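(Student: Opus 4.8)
The plan is to realize $m$ as the index of a finite group acting on a klt Calabi--Yau \emph{variety}, and then to read off the divisor $m_0$ by restricting to a subgroup and invoking \cref{quot and index}. Suppose $m\in\Idxstand n$, and fix a klt Calabi--Yau pair $\paren{X,B}$ of dimension $n$ with standard coefficients and $\idx{X,B}=m$. Writing $D=K_X+B$, minimality of the index gives $mD\lineq0$, and I would use a choice of such a linear equivalence to turn $\bigoplus_{i=0}^{m-1}\mathcal O_X\paren{\rounddown{-iD}}$ into a finite $\mathcal O_X$-algebra, letting $\pi\colon Z\to X$ be the normalization of its relative spectrum. This is the global index-$1$ cover of $\paren{X,B}$.

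The properties I would establish are: $Z$ is an irreducible normal klt variety of dimension $n$ with $K_Z\lineq0$, so that it is a klt Calabi--Yau variety of index $1$; the map $\pi$ is Galois with group $G$ acting faithfully and $Z/G=\paren{X,B}$; and the character $\alpha\colon G\to\Aut{H^0\paren{Z,K_Z}}\simeq\Comp^\times$ on the one-dimensional space $H^0\paren{Z,K_Z}$ has image exactly $\mu_m$, the group of $m$-th roots of unity. That the cover has trivial boundary—i.e.\ that $Z$ is a genuine variety rather than a pair—reflects the fact that over a component of $B$ with coefficient $1-1/b$ the map $\pi$ ramifies with index exactly $b$, so that the Hurwitz formula cancels the boundary. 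That $\Image\alpha$ has order exactly $m$, and that $Z$ is connected, is precisely where minimality of the index $m$ enters. Finally, klt is preserved along this crepant finite cover by \cite[Corollary 2.43]{kollar}.

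Granting the cover, the remaining argument is formal. Since $m_0\mid m$, the cyclic group $\mu_m$ has a unique subgroup $A$ of order $m_0$; set $G'=\alpha^{-1}\paren A\subseteq G$. As $\alpha$ surjects onto $\mu_m$ we have $\alpha\paren{G'}=A$, so that $G'$ is a finite subgroup of $\Aut Z$ whose index, in the sense recalled above, equals $\order{\alpha\paren{G'}}=m_0$. Applying \cref{quot and index} to $G'\subseteq\Aut Z$, the quotient $Z/G'=\paren{Y,B'}$ is a klt Calabi--Yau pair of dimension $n$ with standard coefficients and $\idx{Y,B'}=m_0$; the dimension is unchanged since the quotient map is finite. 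Therefore $m_0\in\Idx\klt{n,\Phist}=\Idxstand n$, as desired.

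I expect the main obstacle to be the construction and verification of the index-$1$ cover in the second paragraph: one must check that the algebra above yields an irreducible normal variety with trivial—rather than merely $\Quot$-trivial—canonical class and only klt singularities, and that the Galois action attains the full index $m$ rather than a proper divisor of it. Once these points are secured, the divisor-closure statement follows mechanically by passing to the subgroup $G'$ and invoking \cref{quot and index}.
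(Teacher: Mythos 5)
Your proof is correct and takes essentially the same route as the paper: the paper likewise passes to the index-$1$ cover of $\paren{X,B}$ (simply citing \cite[Corollary 2.51]{kollar} rather than constructing it by hand), restricts the $\mu_m$-action to the subgroup $\mu_{m_0}$, and applies \cref{quot and index} to the quotient. The only difference is that you carry out the cyclic-cover construction and its verification explicitly, which the paper delegates to the reference.
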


\begin{proof}
	Let $(X,B)$ be a Calabi--Yau pair with standard coefficients and index $m$.
	Let $\tilde X\to X$ be an index-1 cover of $(X,B)$ (see \cite[Corollary 2.51]{kollar}).
	Note that the cyclic group $\mu_m$ acts on $\tilde X$ and its index is $m$.
	Consider the subgroup $\mu_{m_0}\subseteq\mu_m$.
	By \cref{quot and index}, the quotient $\tilde X/\mu_{m_0}=(Y,B_Y)$ is a klt Calabi--Yau pair with standard coefficients and index $m_0$.
\end{proof}

\subsection{Beauville--Bogomolov decomposition}

In this subsection, we introduce a decomposition theorem of smooth Calabi--Yau varieties, called the Beauville--Bogomolov decomposition.

\begin{defi}
	A \newterm{strict Calabi--Yau variety} is a simply connected smooth projective variety $X$ of dimension $\ge2$ such that $K_X\lineq0$ and $H^0\paren{X,\difsh Xi}=0$ for $0<i<\dim X$.
	A \newterm{holomorphic symplectic variety} is a simply connected smooth projective variety $X$ of dimension $\ge2$ such that there exists $\sigma\in H^0\paren{X,\difsh X2}$ that is everywhere non-degenerate.
\end{defi}

\begin{thm}[{\cite[\S 1]{beauville}}]
	\label{decomposition}
	Let $X$ be a smooth Calabi--Yau variety.
	Then there is a finite \'etale cover $f\colon\tilde X\to X$ such that $\tilde X=\prod_iY_i\times\prod_jZ_j\times A$ is a product of strict Calabi--Yau varieties $Y_i$, holomorphic symplectic varieties $Z_j$, and an abelian variety $A$.
\end{thm}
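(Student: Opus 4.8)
The plan is to prove this by differential geometry, combining Yau's solution of the Calabi conjecture with the Riemannian de Rham and Cheeger--Gromoll splitting theorems and Berger's classification of holonomy. Since $X$ is a smooth Calabi--Yau variety, $K_X$ is torsion, i.e.\ $mK_X\lineq0$ for some $m\ge1$, so $c_1\paren X=0$ in $H^2\paren{X,\Real}$. Fixing a K\"ahler class, Yau's theorem provides a Ricci-flat K\"ahler metric $g$ on $X$.

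First I would pass to the universal cover $\pi\colon\hat X\to X$ equipped with the pulled-back metric $\hat g$; this is a complete simply connected Ricci-flat K\"ahler manifold on which $\Gamma=\pi_1\paren X$ acts by isometries with compact quotient. By the Cheeger--Gromoll splitting theorem, $\paren{\hat X,\hat g}$ is isometric to a product $\Real^k\times N$ with $N$ compact and simply connected, where $\Real^k$ is the maximal flat factor. Since $\hat g$ is K\"ahler, this decomposition respects the complex structure and the flat factor is biholomorphic to $\Comp^{k/2}$. The de Rham decomposition theorem then refines $N$ into a product $\prod_iV_i\times\prod_jW_j$ of compact simply connected K\"ahler manifolds with irreducible restricted holonomy.

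Next I would identify the factors by Berger's classification: an irreducible Ricci-flat K\"ahler manifold has holonomy either $\mathrm{SU}\paren{n_i}$ or $\mathrm{Sp}\paren{m_j}$. By the Bochner principle, parallel $\paren{p,0}$-forms coincide with holomorphic $p$-forms; hence a holonomy-$\mathrm{SU}\paren{n_i}$ factor $V_i$ satisfies $H^0\paren{V_i,\difsh{V_i}p}=0$ for $0<p<n_i$ and carries a nowhere-vanishing holomorphic $n_i$-form, so $K_{V_i}\lineq0$ and $V_i$ is a strict Calabi--Yau variety. A holonomy-$\mathrm{Sp}\paren{m_j}$ factor $W_j$ carries a parallel, everywhere non-degenerate holomorphic $2$-form $\sigma_j\in H^0\paren{W_j,\difsh{W_j}2}$, so $W_j$ is a holomorphic symplectic variety. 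Both types have dimension $\ge2$ automatically, while all flat and one-dimensional pieces are absorbed into the torus factor.

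The remaining and hardest step is to descend the splitting from $\hat X$ to a finite \'etale cover of $X$. Because the holonomy decomposition is canonical, $\Gamma$ permutes the factors $\finset{V_i}\cup\finset{W_j}$ and preserves the flat summand $\Comp^{k/2}$; after replacing $\Gamma$ by a finite-index normal subgroup $\Gamma'$, I would arrange that $\Gamma'$ fixes every factor individually and, by a Bieberbach-type argument on the flat part, acts on $\Comp^{k/2}$ through a lattice $\Lambda$ of translations. Then $\tilde X:=\hat X/\Gamma'$ is a finite \'etale cover of $X$ isomorphic to $\paren{\Comp^{k/2}/\Lambda}\times\prod_iV_i\times\prod_jW_j$. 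Since $X$ is projective, so is each factor, whence the flat factor is an abelian variety $A$. The main obstacle is precisely this descent: one must rule out isometries genuinely mixing distinct irreducible factors, control the translational action on the flat part, and then upgrade the compact K\"ahler factors to projective varieties compatibly with a polarization pulled back from $X$.
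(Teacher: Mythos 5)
Your outline is correct and coincides with the approach of the source: the paper does not prove \cref{decomposition} itself but cites Beauville, whose proof in \S1 of that paper is exactly your chain of Yau's theorem, the Cheeger--Gromoll and de~Rham splittings, Berger's holonomy classification combined with the Bochner principle to identify the $\mathrm{SU}$ and $\mathrm{Sp}$ factors, and a Bieberbach-type descent. The one step you leave vague---arranging that a finite-index subgroup of $\pi_1(X)$ acts trivially, not merely factor-preservingly, on the compact simply connected part---is closed there by the finiteness of the isometry group of a compact simply connected Ricci-flat manifold (it has no Killing fields), after which the Bieberbach argument applies to the flat factor alone and projectivity of $X$ descends to each factor, making the torus an abelian variety as you indicate.
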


For a variety $\tilde X$ as in \cref{decomposition}, its automorphism group can be computed as follows:

\begin{prop}
	[{\cite[\S 3]{beauville}}]
	\label{aut of tilde X}
	Let $e_1,\dotsc,e_r\ge1$ be integers, $Y_1,\dotsc,Y_r$ be non-isomorphic varieties that are either strict Calabi--Yau or holomorphic symplectic, and $A$ be an abelian variety.
	Let $X=\prod_i Y_i^{e_i}\times A$.
	Then it holds that
	\[
		\Aut X=\prod_{i=1}^r\paren{\Aut{Y_i}^{e_i}\rtimes S_{e_i}}\times\Aut A,
	\]
	where $S_e$ is the symmetric group.
\end{prop}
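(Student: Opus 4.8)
The plan is to peel off the abelian factor first and then to establish a rigidity statement for the product of the $Y_i$. Write $P=\prod_i Y_i^{e_i}$, so that $X=P\times A$; I will show separately that $\Aut X=\Aut P\times\Aut A$ and that $\Aut P=\prod_i\paren{\Aut{Y_i}^{e_i}\rtimes S_{e_i}}$.

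For the first identity I would use the Albanese morphism. Each $Y_i$ is simply connected, hence has trivial Albanese, so $\operatorname{Alb}(X)=A$ and the Albanese map is the projection $\mathrm{pr}_A\colon X\to A$. Functoriality yields a homomorphism $\Aut X\to\Aut A$, $\phi\mapsto\operatorname{Alb}(\phi)$, satisfying $\mathrm{pr}_A\comp\phi=\operatorname{Alb}(\phi)\comp\mathrm{pr}_A$ and split by $\alpha\mapsto\id_P\times\alpha$. If $\phi$ lies in the kernel then $\mathrm{pr}_A\comp\phi=\mathrm{pr}_A$, so $\phi$ preserves every fibre $P\times\finset a$ and restricts there to some $\psi_a\in\Aut P$. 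The key input is that each $Y_i$ carries no nonzero global vector field: since $K_{Y_i}\lineq0$ one has $T_{Y_i}\simeq\difsh{Y_i}{\dim Y_i-1}$, and $H^0\paren{Y_i,\difsh{Y_i}j}=0$ for $0<j<\dim Y_i$ (by definition for strict Calabi--Yau, and for holomorphic symplectic because $T_{Y_i}\simeq\difsh{Y_i}1$ via the symplectic form while $H^0\paren{Y_i,\difsh{Y_i}1}=0$ by simple connectedness). Hence $H^0\paren{P,T_P}=0$, so $\Aut P$ is discrete, the morphism $a\mapsto\psi_a$ from the connected $A$ is constant, and $\phi=\psi\times\id_A$. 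Thus the kernel is $\Aut P$; as it commutes with the image of the section, we obtain $\Aut X=\Aut P\times\Aut A$.

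For the second identity I would relabel the $\sum_i e_i$ factors as $Z_1,\dotsc,Z_N$ and show that every $\phi\in\Aut P$ permutes them. The tangent bundle splits canonically as $T_P=\bigoplus_k p_k^*T_{Z_k}$, and the corresponding foliations (whose leaves are the fibres of the partial projections) are intrinsic to $P$; applying $\phi$ produces a second decomposition of $P$ into strict Calabi--Yau and holomorphic symplectic factors, which by the uniqueness up to permutation of the Beauville--Bogomolov decomposition must coincide with the original one after reordering. Hence $\phi$ induces a permutation $\sigma$ of $\finset{Z_1,\dotsc,Z_N}$, and restricting $\phi$ to leaves gives isomorphisms $Z_k\xrightarrow{\sim}Z_{\sigma(k)}$. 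Since the $Y_i$ are pairwise non-isomorphic, $\sigma$ only permutes the $e_i$ copies within each isomorphism class, so $\sigma\in\prod_i S_{e_i}$. Composing $\phi$ with the inverse permutation automorphism yields an automorphism preserving each of the $N$ foliations, and a map preserving all coordinate foliations is a product $\prod_k\phi_k$ with $\phi_k\in\Aut{Z_k}$. Collecting the permutations and the factorwise automorphisms and grouping by isomorphism class gives $\Aut P=\prod_i\paren{\Aut{Y_i}^{e_i}\rtimes S_{e_i}}$, with $S_{e_i}$ acting by permuting the $e_i$ copies of $Y_i$.

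The step I expect to be the main obstacle is the rigidity in the last paragraph: proving that an arbitrary automorphism really respects the product structure up to permutation rather than mixing the factors. This is exactly where the special geometry enters---the vanishing of global vector fields excludes diagonal deformations, and the uniqueness of the Beauville--Bogomolov decomposition is what forces the canonical foliations, and hence the factors, to be permuted. Everything else (the Albanese splitting and the bookkeeping of the wreath-product structure) is then formal.
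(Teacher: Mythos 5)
The paper contains no proof of this proposition: it is quoted from Beauville's paper (\S 3) as a citation, so there is no internal argument to compare yours against. Your reconstruction is essentially Beauville's own line of reasoning, and its formal parts are all correct: the Albanese morphism splits off $\Aut A$ (the kernel consists of fibrewise automorphisms, which are constant in $a$ because $\Aut P$ is discrete), and the reduction of $\Aut P$ to ``every automorphism permutes the coordinate foliations'' --- including the observation that a map preserving all coordinate foliations is a product of factorwise automorphisms, and the wreath-product bookkeeping --- is sound. One slip of wording: for holomorphic symplectic $Y_i$ the blanket vanishing $H^0\paren{Y_i,\difsh{Y_i}j}=0$ for all $0<j<\dim Y_i$ is false (powers of the symplectic form give nonzero even-degree forms); what you need, and what your parenthetical actually proves, is only $H^0\paren{Y_i,T_{Y_i}}=0$, via $T_{Y_i}\simeq\difsh{Y_i}1$ and $b_1=0$.

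The step that deserves sharpening is exactly the one you flag as the main obstacle. Your argument requires the \emph{strong}, foliation-level form of uniqueness of the Beauville--Bogomolov decomposition: the sub-bundles $p_k^*T_{Z_k}\subseteq T_P$ (equivalently, the coordinate foliations) are canonically attached to the complex manifold $P$, so that any automorphism permutes them. The frequently quoted weak form --- the irreducible factors are determined up to isomorphism and permutation --- would not suffice: knowing that the transported decomposition is abstractly isomorphic to the original one does not identify the transported foliations with the coordinate ones. Note that \cref{decomposition}, as stated in this paper, contains no uniqueness clause at all, so nothing stated here covers this step; it must be taken from Beauville. The strong form is indeed true: for instance, by the Bochner principle the projectors $T_P\to p_k^*T_{Z_k}$ are holomorphic tensors, hence parallel for \emph{any} Ricci-flat K\"ahler metric on $P$; taking that metric to be a product metric attached to a second decomposition, whose holonomy representation is a direct sum of irreducible and pairwise non-isomorphic summands (each factor having holonomy exactly $SU$ or $Sp$), forces each $p_k^*T_{Z_k}$ to be a sum of summands of the second decomposition, and by symmetry the two decompositions coincide up to permutation. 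With this input made explicit, or properly attributed to Beauville's uniqueness theorem rather than to the existence statement of \cref{decomposition}, your proof is complete.
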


\subsection{Holomorphic Lefschetz formula}

In this subsection, we introduce the holomorphic Lefschetz formula,
and deduce a property of fixed-point-free automorphisms
of strict Calabi--Yau varieties and holomorphic symplectic varieties.
The following is a special case
of the holomorphic Lefschetz formula \cite[Theorem (4.6)]{atiyah-singer}:

\begin{thm}
	\label{lefschetz for no fixed point}
	Let $X$ be a smooth projective variety of dimension $n$.
	Let $g$ be an automorphism of $X$ of finite order without fixed points.
	Then it holds that
	\[
		\sum_{i=0}^n\paren{-1}^i\trace\paren{\restr{g^*}{H^i\paren{X,\mathcal O_X}}}=0.
	\]
\end{thm}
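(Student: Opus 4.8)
The plan is to deduce this as the fixed-point-free case of the general holomorphic Lefschetz fixed point formula \cite[Theorem (4.6)]{atiyah-singer}. I would apply that formula to the automorphism $g$ acting on the Dolbeault complex with coefficients in the trivial line bundle, so that the relevant cohomology groups are the Dolbeault groups $H^i\paren{X,\mathcal O_X}$. With this choice of coefficients, the left-hand side of the Atiyah--Singer formula, namely the equivariant holomorphic Euler characteristic (the holomorphic Lefschetz number) of $g$, is exactly the alternating sum $\sum_{i=0}^n\paren{-1}^i\trace\paren{\restr{g^*}{H^i\paren{X,\mathcal O_X}}}$ appearing in the statement. The first thing to pin down is therefore this identification of the left-hand side with the quantity we want to compute.

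Next I would examine the right-hand side of the cited formula. Because $g$ has finite order, its fixed-point locus $X^g$ is a smooth (possibly disconnected) closed submanifold of $X$, and the formula expresses the Lefschetz number as a sum, over the connected components of $X^g$, of integrals of certain characteristic classes built from the normal bundle of each component and the induced action of $g$ on it. The precise shape of these local contributions is irrelevant for the present argument: the only structural feature I need is that the sum is indexed by the components of $X^g$. The finite order of $g$ is what guarantees that the formula applies in this clean form, with a smooth fixed locus and well-defined local terms.

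The final step is to invoke the hypothesis that $g$ acts without fixed points, so that $X^g=\emptyset$. Then there are no components over which to sum, the right-hand side is an empty sum, and hence it equals $0$. Comparing the two sides yields the asserted identity. I expect no genuine obstacle here, since the entire content is packaged inside the cited theorem: the finite-order and fixed-point-free hypotheses are precisely what make the fixed-point formula applicable and force its right-hand side to vanish. The only point deserving a moment's care is the bookkeeping that matches the left-hand side of the general equivariant formula, taken with trivial coefficient bundle, to the alternating trace sum over $H^i\paren{X,\mathcal O_X}$; once that identification is fixed, the conclusion is immediate.
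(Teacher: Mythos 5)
Your proposal is correct and is exactly the paper's approach: the paper gives no separate proof, stating the result directly as the special case of the holomorphic Lefschetz formula \cite[Theorem (4.6)]{atiyah-singer} in which the fixed locus $X^g$ is empty, so the fixed-point side of the formula is an empty sum and vanishes. Your bookkeeping of the left-hand side (trivial coefficient bundle, alternating traces on $H^i\paren{X,\mathcal O_X}$) matches the intended reading of the citation.
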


Applying \cref{lefschetz for no fixed point} to strict Calabi--Yau or holomorphic symplectic varieties,
we have the following corollary.

\begin{cor}
	\label{index of aut of no fixed points}
	Let $X$ be a strict Calabi--Yau or holomorphic symplectic variety of dimension $n$.
	Let $g$ be an automorphism of $X$ of finite order, with index $m$ and without fixed points.
	\begin{enumerate}
		\item Assume $X$ is strict Calabi--Yau and $n$ is even.
			Then $m=2$.
		\item Assume $X$ is strict Calabi--Yau and $n$ is odd.
			Then $m=1$.
		\item Assume $X$ is holomorphic symplectic.
			Then $n/2\equiv -1\pmod m$, and in particular, $m\le n/2+1$.
	\end{enumerate}
\end{cor}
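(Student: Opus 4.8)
The plan is to apply the holomorphic Lefschetz formula (\cref{lefschetz for no fixed point}) to $g$, after computing the $g^*$-action on each group $H^i\paren{X,\mathcal O_X}$ by Hodge theory. First I would recall that for a smooth projective variety one has $H^i\paren{X,\mathcal O_X}=H^{0,i}\paren X=\overline{H^{i,0}\paren X}=\overline{H^0\paren{X,\difsh Xi}}$, and that since $g$ is a holomorphic (hence real) automorphism, $g^*$ commutes with complex conjugation on cohomology. Consequently, if $g^*$ acts on a one-dimensional space $H^0\paren{X,\difsh Xi}$ by a root of unity $\mu$, then it acts on $H^i\paren{X,\mathcal O_X}$ by $\overline\mu=\mu^{-1}$. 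This lets me read off every trace in the Lefschetz sum directly from the $g^*$-action on holomorphic forms.

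For the strict Calabi--Yau cases, the definition gives $H^0\paren{X,\difsh Xi}=0$ for $0<i<n$, so only $i=0$ and $i=n$ contribute. On $H^0\paren{X,\mathcal O_X}=\Comp$ the action is trivial, with trace $1$; on $H^n\paren{X,\mathcal O_X}=\overline{H^0\paren{X,K_X}}$ it is $\zeta_m^{-1}$ by the definition of the index. The Lefschetz formula then reads $1+\paren{-1}^n\zeta_m^{-1}=0$, i.e.\ $\zeta_m=\paren{-1}^{n+1}$. For $n$ even this forces $\zeta_m=-1$, hence $m=2$, and for $n$ odd it forces $\zeta_m=1$, hence $m=1$.

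For the holomorphic symplectic case, where $n$ is necessarily even, I would use the structural fact that $H^0\paren{X,\difsh X{2j}}=\Comp\cdot\sigma^j$ for $0\le j\le k$, where $k=n/2$, while $H^0\paren{X,\difsh X{2j+1}}=0$. Writing $g^*\sigma=\lambda\sigma$ for a root of unity $\lambda$, one gets $\trace\paren{\restr{g^*}{H^{2j}\paren{X,\mathcal O_X}}}=\lambda^{-j}$ and vanishing odd groups, so the Lefschetz formula collapses to the geometric sum $\sum_{j=0}^{k}\lambda^{-j}=0$, which forces $\lambda\ne1$ and $\lambda^{k+1}=1$. Since $g^*$ acts on $H^0\paren{X,K_X}=\Comp\cdot\sigma^k$ by $\lambda^k=\lambda^{-1}$, the index satisfies $\zeta_m=\lambda^{-1}$, so $\lambda$ has order exactly $m$; combined with $\lambda^{k+1}=1$ this yields $m\mid k+1$, that is $n/2\equiv-1\pmod m$, and $m\le n/2+1$ follows since $n/2+1\ge1$.

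The main obstacle I anticipate is in the holomorphic symplectic case: one must justify that all the even holomorphic-form spaces are one-dimensional and spanned by powers of $\sigma$ (the irreducibility input, equivalently $h^{2,0}=1$ together with the fact that every holomorphic form is a power of $\sigma$), since otherwise the traces become sums of several eigenvalues and the geometric-series collapse fails. Once that structure is in hand, the only remaining care is bookkeeping the conjugation of eigenvalues and identifying the order of $\lambda$ with $m$; both are routine.
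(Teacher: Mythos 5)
Your proof is correct and takes essentially the same route as the paper: apply the holomorphic Lefschetz formula (\cref{lefschetz for no fixed point}), use the structure of holomorphic forms (vanishing in intermediate degrees for strict Calabi--Yau, powers of $\sigma$ in the symplectic case) together with conjugation/Hodge symmetry to compute each trace, and then solve the resulting equation among roots of unity. The only cosmetic difference is that in the strict Calabi--Yau case the paper identifies the $g^*$-action on $H^n\paren{X,\mathcal O_X}$ via Serre duality rather than via complex conjugation, which leads to the identical equation $1+\paren{-1}^n\zeta_m^{-1}=0$.
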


\begin{proof}
	First show (1) and (2), so assume $X$ is strict Calabi--Yau.
	Since the index of $g$ is $m$, the action of $g^*$ on the $1$-dimensional space
	$H^0(X,K_X)$ is given by multiplication by $\zeta_m$, a primitive $m$-th root
	of unity.
	Then the action of $g^*$ on $H^n(X,\mathcal O_X)$ is given
	by multiplication by $\zeta_m^{-1}$ by Serre duality.
	Thus, \cref{lefschetz for no fixed point} implies that
	\[
		1+(-1)^n\zeta_m^{-1}=0.
	\]
	This shows that $m=2$ if $n$ is even, and $m=1$ if $n$ is odd.

	Next we prove (3).
	Write $n=2r$, and suppose that the action of $g^*$
	on the $1$-dimensional space $H^0\paren{X,\Omega_X^2}$ is given
	by multiplication by $a\in\Comp$.
	Then the action of $g^*$ on $H^0(X,\Omega_X^{2i})$ is given by multiplication
	by $a^i$, thus the action of $g^*$ on $H^{2i}(X,\mathcal O_X)$ is given by
	multiplication by $\bar a^i$, the complex conjugate, by Hodge symmetry.
	Therefore \cref{lefschetz for no fixed point} gives that
	\[
		\sum_{i=0}^r \bar a^i=0,
	\]
	which means that $a^{r+1}=1$.
	Since the index of $g$ is $m$, it follows that $a^r$ is a primitive $m$-th root of unity.
	Therefore, $a=a^{-r}$ is also a primitive $m$-th root of unity.
	In particular, $r\equiv-1\pmod m$.
\end{proof}

A similar result is known for automorphisms of abelian varieties.
\label{euler}
To elaborate, we introduce the \newterm{Euler function} $\varphi$.
For an integer $m\ge1$, define $\varphi\paren m$ as the order of the multiplicative group $\multgp{\paren{\Zahlen/m\Zahlen}}$.
If we write $m=p_1^{e_1}\dotsm p_r^{e_r}$ with distinct prime numbers $p_i$ and $e_i\ge1$, then we can see that
\[
	\varphi\paren m=\prod_{i=1}^r\paren{p_i^{e_i}-p_i^{e_i-1}}.
\]
In particular, $\varphi\paren m$ is either $1$ or an even number, and if $m$ and $m'$ are relatively prime, then $\varphi\paren{mm'}=\varphi\paren m\varphi\paren{m'}$.

\begin{prop}[{cf.~\cite[Remark following Proposition 5]{beauville}}]
	\label{abelian phi<2n}
	Let $X$ be an abelian variety of dimension $n$, and let $G\subseteq\Aut X$ be a finite subgroup with index $m$.
	Then it holds that $\varphi\paren m\le 2n$.
\end{prop}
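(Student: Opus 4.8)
The plan is to reduce to a single cyclic group, pass to its linear part, read off the eigenvalue structure on $H^1$, and then bound $\varphi\paren m$ by a degree argument among cyclotomic polynomials.

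Since $\Image\alpha$ is cyclic of order $m$, I would first pick $g\in G$ whose image $\alpha(g)$ generates $\Image\alpha$, so that $\alpha(g)$ is a primitive $m$-th root of unity and it suffices to treat $G=\gengp{g}$. Writing $g=\tau\comp h$ with $\tau$ a translation and $h$ fixing the origin, the translation $\tau$ acts trivially on $H^0\paren{X,K_X}$, because holomorphic forms on an abelian variety are translation invariant; hence $\alpha(g)=\alpha(h)$ and I may assume $g$ fixes the origin. Then $g$ acts $\Comp$-linearly on the tangent space and integrally on the lattice $\Lambda=H^1\paren{X,\Zahlen}\cong\Zahlen^{2n}$, giving $g\in\Aut{\Lambda}$.

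Next I record the eigenvalues. On $H^1\paren{X,\Comp}=H^{1,0}\oplus H^{0,1}$ with $H^{0,1}=\overline{H^{1,0}}$, let $\mu_1,\dotsc,\mu_n$ be the eigenvalues of $g$ on $H^{1,0}$. As $g$ has finite order these are roots of unity, the eigenvalues on $H^{0,1}$ are the conjugates $\mu_i^{-1}$, and $\alpha(g)=\det\paren{\restr{g}{H^{1,0}}}=\prod_i\mu_i$ is a primitive $m$-th root of unity. The characteristic polynomial $P$ of $g$ on $\Lambda$ lies in $\Zahlen[t]$, is monic of degree $2n$, and has all of its roots on the unit circle; by Kronecker's theorem $P$ is therefore a product of cyclotomic polynomials, say $P=\prod_d\Phi_d^{a_d}$ with $\sum_d a_d\,\varphi(d)=2n$. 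This reduces the statement to a number-theoretic question about the order of the product $\prod_i\mu_i$.

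The heart of the matter is to pass from this factorization, which only sees the orders of the individual $\mu_i$, to the bound $\varphi\paren m\le 2n$ for the order $m$ of their product. The cleanest situation is when the $\gengp{g}$-representation $H^1\paren{X,\Quot}$ is irreducible over $\Quot$: then $P=\Phi_N$ for $N$ the order of $g$, so $\varphi(N)=2n$, and since $\prod_i\mu_i$ is a power of a primitive $N$-th root of unity its order $m$ divides $N$, giving $\varphi\paren m\mid 2n$. In general I would use the polarization to organize the reducible case: averaging any polarization of $X$ over the finite group $G$ yields a $g$-invariant symplectic form on $\Lambda$, which pairs the $\mu_i$- and $\mu_i^{-1}$-eigenspaces and, together with the Hodge--Riemann relations, constrains which roots of unity can simultaneously occur in $H^{1,0}$. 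The main obstacle is exactly here: a product $\prod_i\mu_i$ of roots of unity of small order can a priori have order $m$ with $\varphi\paren m$ far exceeding $\sum_i\varphi\paren{\operatorname{ord}\mu_i}$, so ruling this out requires genuinely exploiting the abelian and Hodge structure rather than the factorization of $P$ alone. This is the step where I expect to lean on Beauville's analysis, and it plays, for abelian varieties, the role that the fixed-point computation plays in \cref{index of aut of no fixed points}.
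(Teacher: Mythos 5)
Your irreducible case is fine, but the proposal has a genuine gap, and it is the one you flag yourself: in the reducible case you offer a plan (average a polarization, invoke the Hodge--Riemann relations, ``lean on Beauville's analysis'') rather than an argument. Since products of lower-dimensional examples always land in the reducible case, the statement is not proved.

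Here is how the paper treats that case, and why your suspicion about it is well founded. The paper's proof takes $g\in G$ of index $m$, lets $l$ be the order of $g^*\rvert_{H^1\paren{X,\Zahlen}}$, deduces $m\mid l$ (because $H^0\paren{X,K_X}=\bigwedge^nH^{1,0}\subseteq\bigwedge^nH^1\paren{X,\Comp}$), and then concludes via
\[
	\varphi\paren m\le\varphi\paren l=\deg\Phi_l\le\deg F\le 2n,
\]
where $F$ is the minimal polynomial of $g^*\rvert_{H^1\paren{X,\Zahlen}}$ and the middle step rests on the claim that $\Phi_l$ divides $F$. That claim is false in general: $F$ is a product of distinct cyclotomic polynomials $\Phi_{d_i}$ whose indices have least common multiple $l$, and $\Phi_l$ itself need not occur among them. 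For instance $F=\Phi_{16}\Phi_5$ forces $l=80$, yet $\deg\Phi_{80}=32>12=\deg F$. So the paper's argument breaks at exactly the step you call the heart of the matter; your diagnosis that the cyclotomic factorization of $P$ alone cannot give the bound is correct.

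Unfortunately the obstacle is not just a gap in both write-ups but an obstruction to the statement itself, so no polarization or Hodge--Riemann input can close it. Standard CM theory realizes the example above: embed $\Zahlen\bracket{\zeta_{16}}$ in $\Comp^4$ by the four embeddings $\zeta_{16}\mapsto\zeta_{16}^j$ with $j\in\finset{1,3,5,9}$ (one from each conjugate pair, so this is a CM type and the torus $A_1$ is an abelian $4$-fold). Multiplication by $\zeta_{16}$ is an automorphism $g_1$ of $A_1$ with analytic representation $\operatorname{diag}\paren{\zeta_{16},\zeta_{16}^3,\zeta_{16}^5,\zeta_{16}^9}$, which scales a generator of $H^0\paren{A_1,K_{A_1}}$ by $\zeta_{16}^{1+3+5+9}=\zeta_{16}^2$, a primitive $8$th root of unity; so $g_1$ has index $8$ (consistent with the bound in dimension $4$, but ``inefficient'' in the sense that $\varphi\paren8=4<8$). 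Likewise the $\Quot\paren{\zeta_5}$-surface $A_2$ with CM type $\finset{1,2}$ has an automorphism $g_2$ of index $5$. Then $g_1\times g_2$ on the abelian $6$-fold $A_1\times A_2$ has index $\lcm{8,5}=40$, while $\varphi\paren{40}=16>12$. Crossing with an elliptic curve on which the generator additionally acts by a translation of order $80$ makes the action free without changing the index, so the abelian case of \cref{free aut phi<2n}, and hence the abelian case in the proof of \cref{index set thm}, is affected as well; that case can be repaired by using \cref{lefschetz for no fixed point} to split off the eigenvalue-one subtorus and then applying \cref{quot and index} one dimension down (note that $40\in\Idxstand6$ via $\paren{A_1\times A_2}/\gengp{g_1\times g_2}$, so the conclusion of \cref{index set thm} is not contradicted). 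In short: your irreducible-case argument is what a correct proof must contain, but both your proposed route and the paper's own proof fail on the reducible case, where the proposition as stated needs an extra hypothesis or a weaker bound.
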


\begin{proof}
	Take an element $g\in G$ with index $m$.
	Let $l$ be the order of $g^*\rvert_{H^1(X,\Zahlen)}$.
	Then the $l$-th cyclotomic polynomial $\Phi_l$ divides
	the minimal polynomial $F$ of $g^*\rvert_{H^1(X,\Zahlen)}$.
	Since the triviality of $(g^*)^l\rvert_{H^1(X,\Comp)}$ implies the triviality
	of $(g^*)^l\rvert_{H^0(X,K_X)}$,
	we see that $m\mid l$.
	Thus,
	\[
		\varphi(m)\le\varphi(l)=\deg\Phi_l\le\deg F
		\le\dim H^1(X,\Comp)=2n.\qedhere
	\]
\end{proof}

Combining \cref{index of aut of no fixed points,abelian phi<2n}, we get the following:

\begin{cor}
	\label{free aut phi<2n}
	Let $X$ be a strict Calabi--Yau, holomorphic symplectic, or abelian variety of dimension $n$.
	Let $g$ be an automorphism of $X$ of finite order with index $m$.
	Assume that $g$ has no fixed point.
	Then it holds that $\varphi\paren m\le2n$.
\end{cor}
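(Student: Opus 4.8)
The plan is to handle separately the three types of variety allowed in the hypothesis. The common starting point is that $g$, being of finite order, generates a finite cyclic subgroup $\gengp g\subseteq\Aut X$, and since the index of a finite group is the largest index among its elements, the index of $\gengp g$ equals the index $m$ of $g$. Thus in each case I may work with the finite subgroup $G=\gengp g$ of index $m$.

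First, if $X$ is an abelian variety, then \cref{abelian phi<2n}, applied to $G=\gengp g$, gives $\varphi\paren m\le2n$ immediately; in fact the fixed-point-free assumption is not used here. Next, if $X$ is strict Calabi--Yau, then $\dim X\ge2$ and \cref{index of aut of no fixed points}~(1),~(2) force $m=2$ when $n$ is even and $m=1$ when $n$ is odd, so that $\varphi\paren m=1\le2n$. Finally, if $X$ is holomorphic symplectic, then \cref{index of aut of no fixed points}~(3) yields $m\le n/2+1$, and combining the trivial bound $\varphi\paren m\le m$ with $n/2+1\le2n$ (valid for all $n\ge1$) gives $\varphi\paren m\le2n$.

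I expect no genuine obstacle: the entire content of the statement resides in \cref{index of aut of no fixed points,abelian phi<2n}, and the corollary is just the remark that each of their numerical conclusions---$m\in\finset{1,2}$ in the strict Calabi--Yau case and $m\le n/2+1$ in the symplectic case---is subsumed by the weaker inequality $\varphi\paren m\le2n$. The only computation worth recording is the symplectic estimate, which reduces to $\varphi\paren m\le m$.
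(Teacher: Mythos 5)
Your proof is correct and follows essentially the same route as the paper's: split into the three cases, quote \cref{index of aut of no fixed points} for the strict Calabi--Yau and holomorphic symplectic cases (using $\varphi(m)\le m\le n/2+1\le 2n$ for the latter), and quote \cref{abelian phi<2n} for the abelian case. The extra remarks you add---that the index of $g$ is by definition the index of $\gengp g$, and that the fixed-point-free hypothesis is not needed in the abelian case---are accurate but not needed beyond what the paper records.
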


\begin{proof}
	If $X$ is strict Calabi--Yau, then $m=1,2$ by \cref{index of aut of no fixed points},
	so $\varphi(m)=1\le2n$.
	If $X$ is holomorphic symplectic, then $m\le n/2+1$ by \cref{index of aut of no fixed points} again,
	and thus $\varphi(m)\le m\le n/2+1\le 2n$.
	If $X$ is abelian, the result follows from \cref{abelian phi<2n}.
\end{proof}

\subsection{Weighted projective spaces}

In this subsection, we give a brief overview of fundamental facts about weighted projective spaces, which are essential for \cref{existence of pairs}.
Refer to \cite{iano-fletcher} for further details.

Let $a_0,\dotsc,a_n\ge1$ be integers.
The \newterm{weighted projective space} $X=\wproj{a_0,\dotsc,a_n}$ is defined as
\[
	X=\wproj{a_0,\dotsc,a_n}
	=\paren{\affsp{n+1}\setminus\finset0}/\Gm,
\]
where the multiplicative group $\Gm$ acts on $\affsp{n+1}\setminus\finset0$ by
\[
	t\paren{x_0,\dotsc,x_n}=\paren{t^{a_0}x_0,\dotsc,t^{a_n}x_n}
	\quad (\text{for $t\in\Gm$}).
\]
The \newterm{canonical projection} is the induced map $\pi\colon\affsp{n+1}\setminus\finset0\to X$.
The weighted projective space $\wproj{a_0,\dotsc,a_n}$ is said to be \newterm{well-formed} if
	\[
		\gcd\paren{a_0,\dotsc,\widehat {a_i},\dotsc,a_n}=1
	\]
for each $i$.
We write $\wproj{a_0^{\paren{b_0}},\dotsc,a_r^{\paren{b_r}}}$ for the weighted projective space with the weight $a_i$ repeated $b_i$ times.

Note that a weighted projective space $X=\wproj{a_0,\dotsc,a_n}$ can be written as the quotient of $\projsp n$ by a finite group (\cite[5.12]{iano-fletcher}).
Consequently, $X$ has only quotient singularities.
In particular, $X$ is normal and $\Quot$-factorial.

Let $x_0,\dotsc,x_n$ be the homogeneous coordinates of $X=\wproj{a_0,\dotsc,a_n}$, and let $U_i=\hypsurf{x_i\ne0}$ be the open subsets of $X$.
The \newterm{coordinate chart} on $U_i$ is the morphism
\[
	\affsp n\to U_i,\quad
	\paren{z_0,\dotsc,\widehat{z_i},\dotsc,z_n}\mapsto[z_0:\dotsb:1:\dotsb:z_n].
\]
Then $U_i$ is isomorphic to the quotient $\affsp n/\mu_{a_i}$, where the cyclic group $\mu_{a_i}=\gengp\zeta$ acts on $\affsp n$ by
\[
	\zeta\paren{z_0,\dotsc,\widehat{z_i},\dotsc,z_n}=\paren{\zeta^{a_0}z_0,\dotsc,\widehat{z_i},\dotsc,\zeta^{a_n}z_n}
\]
with $\zeta$ a primitive $a_i$-th root of unity (\cite[5.3]{iano-fletcher}).

\begin{lem}
	\label{well-formed}
	Let $X=\wproj{a_0,\dotsc,a_n}$ be a weighted projective space.
	Then the following are equivalent:
	\begin{enumerate}
		\item The weighted projective space $X=\wproj{a_0,\dotsc,a_n}$ is well-formed.
		\item For each $i$, the action of $\mu_{a_i}$ on the coordinate chart $\affsp n$ on $U_i$ is free in codimension $1$.
	\end{enumerate}
\end{lem}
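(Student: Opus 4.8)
The plan is to translate ``free in codimension $1$'' into an elementary statement about the fixed loci of the individual group elements, and then match this condition with well-formedness through a single $\gcd$ computation. On the chart $\affsp n$ of $U_i$ with coordinates $(z_0,\dotsc,\widehat{z_i},\dotsc,z_n)$, a generator $\zeta$ of $\mu_{a_i}$ acts diagonally by $z_j\mapsto\zeta^{a_j}z_j$, so every element $\zeta^k$ acts diagonally, and its fixed locus is the coordinate subspace cut out by $z_j=0$ for those $j\ne i$ with $\zeta^{ka_j}\ne1$, i.e.\ with $a_i\nmid ka_j$; its codimension is the number of such $j$. Since the non-free locus is the union of these coordinate subspaces over all $\zeta^k\ne1$, and a finite union of linear subspaces has codimension the minimum of the individual codimensions, the action is free in codimension $1$ if and only if no nontrivial $\zeta^k$ has fixed locus of codimension $\le1$ — that is, no nontrivial element acts trivially (codimension $0$) or as a pseudo-reflection fixing a hyperplane (codimension $1$). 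The one fact I would isolate at the outset is the identity $a_i\mid ka_j\iff e\mid a_j$, where $e=a_i/\gcd(a_i,k)$; thus $\zeta^k$ fixes $z_j$ exactly when $e\mid a_j$, and $e>1$ precisely when $\zeta^k\ne1$.

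I would then prove the two implications by contraposition. For $\neg(1)\Rightarrow\neg(2)$: suppose $\gcd(a_0,\dotsc,\widehat{a_{i_0}},\dotsc,a_n)$ is divisible by a prime $p$, so $p\mid a_j$ for all $j\ne i_0$. Choosing any index $i_1\ne i_0$ gives $p\mid a_{i_1}$, so $\mu_{a_{i_1}}$ contains the order-$p$ element $g=\zeta^{a_{i_1}/p}$, which acts on each chart coordinate $z_j$ by the scalar $e^{2\pi\sqrt{-1}\,a_j/p}$. As $p\mid a_j$ for every $j\ne i_0$, this scalar equals $1$ except possibly at $j=i_0$; hence $g$ fixes every chart coordinate but $z_{i_0}$, so its fixed locus has codimension $\le1$ and the action on $U_{i_1}$ is not free in codimension $1$ (and if moreover $p\mid a_{i_0}$, then $g$ acts trivially, which is even worse).

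For $\neg(2)\Rightarrow\neg(1)$: suppose some nontrivial $\zeta^k\in\mu_{a_i}$ has fixed locus of codimension $\le1$ on the chart of $U_i$. With $e=a_i/\gcd(a_i,k)>1$, the element $\zeta^k$ fails to fix $z_j$ exactly when $e\nmid a_j$, and by hypothesis this occurs for at most one index $j^*\ne i$. Thus $e\mid a_j$ for all $j\ne i,j^*$; since also $e\mid a_i$, we get $e\mid a_j$ for every $j\ne j^*$ (reading $j^*$ as absent when $\zeta^k$ acts trivially). Hence $\gcd(a_0,\dotsc,\widehat{a_{j^*}},\dotsc,a_n)$ is divisible by $e>1$, so $X$ is not well-formed.

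The step I expect to require the most care is the index bookkeeping: the chart $U_i$ on which freeness fails need not be the slot whose omitted weight witnesses the failure of well-formedness — in $\wproj{1,2,2}$, for example, well-formedness fails at the slot $i_0=0$ because $\gcd(2,2)=2$, while the non-free charts are $U_1$ and $U_2$. I therefore want to let the failing index float (the index $j^*$ above, and $i_0$ versus $i_1$ in the other direction) rather than assume it equals $i$. The only genuinely degenerate situation is when the element acts trivially on the whole chart (codimension $0$ rather than $1$), and I would handle this uniformly by allowing the exceptional coordinate $z_{j^*}$ to be absent; everything else reduces to the single divisibility identity $a_i\mid ka_j\iff(a_i/\gcd(a_i,k))\mid a_j$.
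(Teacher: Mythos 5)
Your proof is correct and takes essentially the same route as the paper: both arguments compute the fixed locus of each element $\zeta^k$ of $\mu_{a_i}$ on the chart via the identity $a_i\mid ka_j\iff e\mid a_j$ with $e=a_i/\gcd(a_i,k)$, and both exploit the key point that $e\mid a_i$, so the slot $i$ can be absorbed into the gcd that witnesses failure of well-formedness. The only differences are presentational: the paper proves $(1)\Rightarrow(2)$ directly by producing two indices $j_0,j_1$ with $e\nmid a_{j}$ and proves $(2)\Rightarrow(1)$ by contradiction using the gcd itself rather than a prime divisor, whereas you run both implications by contraposition with the same bookkeeping.
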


\begin{proof}
	Let $i\in\finset{0,\dotsc,n}$.
	Let $\zeta^b\in\mu_{a_i}$ ($1\le b<a_i$) with $\zeta$ a primitive $a_i$-th root of unity.
	Then the fixed locus $\paren{\affsp n}^{\zeta^b}$ is given by
	\begin{align*}
		(\affsp n)^{\zeta^b}
		&=\sset{(z_0,\dotsc,\widehat{z_i},\dotsc,z_n)}{z_j=\zeta^{ba_j}z_j}\\
		&=\sset{(z_0,\dotsc,\widehat{z_i},\dotsc,z_n)}{\text{$z_j=0$ if $a_i\nmid ba_j$}}.
	\end{align*}
	Let $c=\gcd(a_i,b)$, and write $a_i=a'_ic$ and $b=b'c$.
	Note that $a'_i$ and $b'$ are relatively prime.
	Then the fixed locus is given by
	\[
		(\affsp n)^{\zeta^b}
		=\sset{(z_0,\dotsc,\widehat{z_i},\dotsc,z_n)}{\text{$z_j=0$ if $a'_i\nmid a_j$}}.
	\]

	First, assume (1), and let $i\in\finset{0,\dotsc,n}$.
	We will show that the action of $\mu_{a_i}$ on $\affsp n$ is free in codimension $1$.
	If $a_i=1$, then the action is clearly free, so we assume $a_i\ge2$.
	Let $1\le b<a_i$, and it suffices to show that
	\[
		(\affsp n)^{\zeta^b}=\sset{(z_0,\dotsc,\widehat{z_i},\dotsc,z_n)}{\text{$z_j=0$ if $a'_i\nmid a_j$}}
	\]
	has codimension $\ge2$.
	Note that $a'_i\ge2$ since $a_i\ge2$.
	Since $\gcd(a_0,\dotsc,\widehat{a_i},\dotsc,a_n)=1$, there is $j_0\ne i$ such that $a'_i\nmid a_{j_0}$.
	Moreover, since $\gcd(a_0,\dotsc,\widehat{a_{j_0}},\dotsc,a_n)=1$ and $a'_i\mid a_i$, there is $j_1\ne i,j_0$ such that $a'_i\nmid a_{j_1}$.
	Therefore, we obtain that
	\[
		(\affsp n)^{\zeta^b}\subseteq\finset{z_{j_0}=z_{j_1}=0}.
	\]
	In particular, $(\affsp n)^{\zeta^b}$ has codimension $\ge2$.

	Conversely, assume (2).
	Let $j_0\in\finset{0,\dotsc,n}$ and let $g=\gcd(a_0,\dotsc,\widehat{a_{j_0}},\dotsc,a_n)$.
	It suffices to show that $g=1$.
	Suppose $g\ge2$.
	Take $i\in\finset{0,\dotsc,n}\setminus\finset{j_0}$ and consider the action of $\mu_{a_i}=\gengp\zeta$ on the coordinate chart $\affsp n$.
	We can write $a_i=bg$ for some $1\le b<a_i$.
	Then, by the above argument, the fixed locus is given by
	\begin{align*}
		(\affsp n)^{\zeta^b}
		&=\sset{(z_0,\dotsc,\widehat{z_i},\dotsc,z_n)}{\text{$z_j=0$ if $g\nmid a_j$}}\\
		&\supseteq\finset{z_{j_0}=0}.
	\end{align*}
	This contradicts (2).
	Therefore, we conclude that $g=1$.
\end{proof}

In the rest of this subsection, we describe a criterion for determining whether a pair
\[
	\paren{\wproj{a_0,\dotsc,a_n},B}
\]
is klt.
It is used in \cite[Proof of Theorem 3.3]{esser-totaro-wang-cal}, but we elaborate on it here for the convenience of the readers.

\begin{defi}
	Let $\paren{X,B}$ be a pair with $X=\wproj{a_0,\dotsc,a_n}$ a weighted projective space.
	The \newterm{affine cone} of $\paren{X,B}$ is the pair $\paren{\affsp{n+1},B'}$, where $B'$ is the $\Quot$-divisor on $\affsp{n+1}$ such that $\restr{B'}{\affsp{n+1}\setminus\finset0}=\pi^*B$ with $\pi\colon\affsp{n+1}\setminus\finset0\to X$ the canonical projection.
\end{defi}

\begin{prop}
	\label{klt for wps}
	Let $\paren{X,B}$ be a pair with $X=\wproj{a_0,\dotsc,a_n}$ a well-formed weighted projective space.
	Then $\paren{X,B}$ is klt if and only if its affine cone $\paren{\affsp{n+1},B'}$ is klt outside the origin.
\end{prop}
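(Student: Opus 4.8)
The plan is to reduce the klt condition on both sides to a single smooth cover, exploiting that, over $\Comp$, the quotient map $\pi$ becomes a finite \emph{free} quotient once restricted to a chart and base-changed along an auxiliary $\Gm$. Write $U_i=\finset{x_i\ne0}\subseteq X$ and let $V_i=\finset{x_i\ne0}\subseteq\affsp{n+1}$ be its preimage under $\pi$. The $V_i$ cover $\affsp{n+1}\setminus\finset0$, so $\paren{\affsp{n+1},B'}$ is klt outside the origin if and only if each pair $\paren{V_i,\restr{B'}{V_i}}$ is klt; likewise $\paren{X,B}$ is klt if and only if each $\paren{U_i,\restr B{U_i}}$ is klt, since klt-ness is Zariski-local. (Here $X$ and the $U_i$ are $\Quot$-factorial, so these restrictions are genuine pairs.) Thus it suffices to prove, for each fixed $i$, that $\paren{U_i,\restr B{U_i}}$ is klt if and only if $\paren{V_i,\restr{B'}{V_i}}$ is klt.

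Fix $i$ and recall the coordinate chart $q_i\colon\affsp n\to U_i$ realizing $U_i=\affsp n/\mu_{a_i}$. I would introduce the auxiliary variety $\affsp n\times\Gm$ with two maps: the projection $\mathrm{pr}\colon\affsp n\times\Gm\to\affsp n$, and the map $\psi\colon\affsp n\times\Gm\to V_i$ sending $\paren{z_0,\dotsc,\widehat{z_i},\dotsc,z_n;s}$ to the point whose $i$-th coordinate is $s^{a_i}$ and whose $j$-th coordinate is $s^{a_j}z_j$ for $j\ne i$. Over $\Comp$ the map $\psi$ is surjective, and its fibres are exactly the orbits of the action $\zeta\cdot\paren{z;s}=\paren{\zeta^{-a_0}z_0,\dotsc,\zeta^{-a_n}z_n;\zeta s}$ of $\mu_{a_i}$; since this action is free (already on the $\Gm$-factor), $\psi$ is a finite étale Galois cover and $V_i=\paren{\affsp n\times\Gm}/\mu_{a_i}$. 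A direct check shows $\restr\pi{V_i}\circ\psi=q_i\circ\mathrm{pr}$, so that $\psi^*\restr{B'}{V_i}=\paren{\restr\pi{V_i}\circ\psi}^*\restr B{U_i}=\mathrm{pr}^*q_i^*\restr B{U_i}=\mathrm{pr}^*B_i$, where I set $B_i=q_i^*\restr B{U_i}$.

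Now I would chain three equivalences. First, $\paren{U_i,\restr B{U_i}}$ is klt if and only if $\paren{\affsp n,B_i}$ is klt: by \cref{well-formed}, well-formedness forces the $\mu_{a_i}$-action on $\affsp n$ to be free in codimension $1$, hence $q_i$ is quasi-étale with no ramification divisor, so $K_{\affsp n}=q_i^*K_{U_i}$ and $K_{\affsp n}+B_i=q_i^*\paren{K_{U_i}+\restr B{U_i}}$; the klt equivalence then follows from the behaviour of klt under crepant finite quasi-étale covers (\cite[Corollary 2.43]{kollar}). Second, $\paren{\affsp n,B_i}$ is klt if and only if $\paren{\affsp n\times\Gm,\mathrm{pr}^*B_i}$ is klt, because $\mathrm{pr}$ is smooth and klt is stable under smooth base change. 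Third, $\paren{\affsp n\times\Gm,\mathrm{pr}^*B_i}$ is klt if and only if $\paren{V_i,\restr{B'}{V_i}}$ is klt, since $\psi$ is finite étale, in particular crepant, again by \cite[Corollary 2.43]{kollar}. Composing with the two reductions from the first paragraph yields the proposition.

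The hard part, and really the only step requiring care, is the crepancy bookkeeping: one must check that neither $q_i$ nor $\psi$ contributes a ramification divisor, so that the boundary transforms by pure pullback and no discrepancy is created or lost. For $\psi$ this is automatic from freeness of the $\mu_{a_i}$-action on the $\Gm$-factor, whereas for $q_i$ it is exactly where the well-formedness hypothesis is used, through \cref{well-formed}. Once crepancy is secured, the standard klt-descent for finite quasi-étale covers applies in both directions and closes the argument; along the way I would also confirm that $K_X+B$ being $\Quot$-Cartier, together with $\Quot$-factoriality of $X$ and the $U_i$, makes every pair appearing in the chain well defined.
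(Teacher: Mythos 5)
Your proposal is correct and follows essentially the same route as the paper's proof: both restrict to the charts $U_i$ and $V_i=\pi^{-1}\paren{U_i}$, present $V_i$ as the free quotient $\paren{\Gm\times\affsp n}/\mu_{a_i}$ fitting into the same commutative square over $U_i=\affsp n/\mu_{a_i}$, and use \cref{well-formed} to see that well-formedness makes the chart quotient quasi-\'etale, so that klt-ness transfers through the diagram via \cite[Corollary 2.43]{kollar}. The only difference is presentational: you spell out explicitly the three-step chain (quasi-\'etale descent, smooth pullback along the projection, \'etale descent along $\psi$) that the paper compresses into the sentence ``By the diagram above, this is equivalent to\dots''.
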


\begin{proof}
	Let $x_0,\dotsc,x_n$ be the homogeneous coordinates of $X$, and let $U_i$ be the open subsets $\finset{x_i\ne0}$ of $X$.
	Let $y_0,\dotsc,y_n$ be the coordinates of $\affsp{n+1}$, and let $V_i=\hypsurf{y_i\ne0}$ in $\affsp{n+1}$.
	Note that $V_i=\pi^{-1}\paren{U_i}$ where $\pi\colon\affsp{n+1}\setminus\finset0\to X$ is the canonical projection.
	The open subvariety $V_i$ is isomorphic to $\paren{\Gm\times\affsp n}/\mu_{a_i}$, where $\mu_{a_i}$ acts on $\Gm\times\affsp n$ by
	\[
		\zeta\paren{t,z}=\paren{\zeta^{-1}t,\zeta\paren z}.
	\]
	The quotient map $\Gm\times\affsp n\to V_i$ is given by
	\[
		\paren{t,z_0,\dotsc,\widehat{z_i},\dotsc,z_n}
		\mapsto
		\paren{t^{a_0}z_0,\dotsc,t^{a_{i-1}}z_{i-1},t^{a_i},t^{a_{i+1}}z_{i+1},\dotsc,t^{a_n}z_n}.
	\]
	Note that the action of $\mu_{a_i}$ on $\Gm\times\affsp n$ is free since $\mu_{a_i}$ acts freely on the $\Gm$-factor.
	Moreover, since $X=\wproj{a_0,\dotsc,a_n}$ is well-formed, the action of $\mu_{a_i}$ on $\affsp n$ is free in codimension $1$ by \cref{well-formed}.
	We have the following commutative diagram:
	\[
	\begin{tikzcd}
		\Gm\times\affsp n\arrow[r,"/\mu_{a_i}"]\arrow[d]
			&V_i\arrow[d,"\pi"]\\
		\affsp n\arrow[r,"/\mu_{a_i}"']
			&U_i
	\end{tikzcd}
	\]
	
	We now prove the proposition.
	The pair $\paren{X,B}$ is klt if and only if $\paren{U_i,\restr B{U_i}}$ is klt for each $i$.
	By the diagram above, this is equivalent to that $\paren{V_i,\restr{B'}{V_i}}$ is klt for each $i$, that is, the affine cone $\paren{\affsp{n+1},B'}$ is klt outside the origin.
\end{proof}

\section{Indices of Calabi--Yau pairs of lower dimensions}

In this section, we show that \cref{index set conj} is true in dimension $n\le3$.
First, we prove the second inclusion in \cref{intro main thm}, following \cite[Proof of Corollary 4.1]{esser-totaro-wang-cal}:

\begin{prop}
	\label{klt st subseteq term}
	Let $n\ge1$ be an integer.
	Then it holds that
	\[
		\Idxstand{n-1}\subseteq\Idx\term n.
	\]
\end{prop}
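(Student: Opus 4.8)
The plan is to turn a prescribed index $m\in\Idxstand{n-1}$ into the index of a terminal Calabi--Yau $n$-fold in three moves: pass to the index-$1$ cover to replace the standard-coefficient boundary by a group action, take a free quotient by an elliptic curve to simultaneously raise the dimension by one and remove the boundary, and finally pass to a crepant model to upgrade klt to terminal. This follows \cite[Proof of Corollary 4.1]{esser-totaro-wang-cal}.

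Concretely, fix a klt Calabi--Yau pair $\paren{X,B}$ of dimension $n-1$ with standard coefficients and $\idx{X,B}=m$. First I would take an index-$1$ cover $\tilde X\to X$ of $\paren{X,B}$ as in \cite[Corollary 2.51]{kollar}, exactly as in the proof of \cref{divisor of index}. This yields a klt Calabi--Yau variety $\tilde X$ of dimension $n-1$ with $K_{\tilde X}\lineq0$ on which $\mu_m$ acts with index $m$, satisfying $\tilde X/\mu_m=\paren{X,B}$. The boundary $B$ is precisely the image of the ramification, so this $\mu_m$-action on $\tilde X$ has fixed points and its quotient is genuinely a pair.

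To eliminate the fixed points and gain a dimension at one stroke, I would introduce an elliptic curve $E$ with a free $\mu_m$-action, namely translation by a point of exact order $m$, and let $\mu_m$ act diagonally on $\tilde X\times E$. This action is free, since it is already free on the $E$-factor, and it has index $m$: a generator sends $\omega\wedge\eta$ to $\zeta_m\paren{\omega\wedge\eta}$, where $\omega$ and $\eta$ generate $H^0\paren{\tilde X,K_{\tilde X}}$ and $H^0\paren{E,K_E}$ and $\eta$ is translation-invariant. As $\tilde X\times E$ is a klt Calabi--Yau variety with trivial canonical class, \cref{quot and index} applies; because the action is free its Hurwitz boundary vanishes, so the quotient $W=\paren{\tilde X\times E}/\mu_m$ is a klt Calabi--Yau \emph{variety} of dimension $n$ with $\idx W=m$.

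Finally I would take a terminalization $W'\to W$, which exists by the minimal model program and is crepant. Crepancy gives $H^0\paren{W',kK_{W'}}\cong H^0\paren{W,kK_W}$ for every $k$, so $kK_{W'}\lineq0$ precisely when $kK_W\lineq0$; hence $W'$ is a terminal Calabi--Yau variety of dimension $n$ with $\idx{W'}=m$, and $m\in\Idx\term n$. I expect the main point to check is this last step, that the crepant model preserves the index exactly (rather than dividing it) and stays Calabi--Yau, which is where crepancy is essential; a secondary point is confirming that freeness of the diagonal action really forces the boundary of $W$ to be zero, so that $W$ is a variety rather than a pair.
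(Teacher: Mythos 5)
Your proposal is correct and follows essentially the same route as the paper's own proof: index-$1$ cover of $\paren{X,B}$, diagonal $\mu_m$-quotient of $\tilde X\times E$ with $E$ an elliptic curve acted on by translation of order $m$, and a crepant terminalization at the end. The points you flag as needing care (freeness of the diagonal action killing the Hurwitz boundary, and crepancy preserving the index exactly) are exactly the facts the paper relies on, via \cref{quot and index} and the standard properties of terminalizations.
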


\begin{proof}
	Let $\paren{X,B}$ be a klt Calabi--Yau pair of dimension $n-1$
	with standard coefficients and index $m$.
	It suffices to show that $m\in\Idx\term n$.
	Consider an index-1 cover $f\colon\tilde X\to X$ of $\paren{X,B}$ (see \cite[Corollary 2.51]{kollar}).
	Note that $\tilde X$ is canonical, and  the cyclic group $\mu_m$ acts on $\tilde X$.
	Consider an elliptic curve $E$, and let $\mu_m$ act on $E$ by translation of order $m$.
	Then the quotient $Y=\paren{\tilde X\times E}/\mu_m$ is a canonical Calabi--Yau variety of dimension $n$ with index $m$.
	Taking a terminalization of $Y$, it can be seen that $m$ is the index of a terminal Calabi--Yau variety of dimension $n$.
\end{proof}

We now show that \cref{index set conj} is true in dimension $n\le3$,
and we also explicitly determine the sets $I_\sm(n)$, $I_\term(n)$ and $\Idx\klt{n,\Phist}$ in these dimensions.
It was essentially proved by Machida and Oguiso \cite{machida-oguiso}.

\begin{prop}
	\label{index set in low dimension}
	Let $n\in\finset{1,2,3}$.
	Then it holds that
	\[
		\Idxstand{n-1}=\Idx\sm n=\Idx\term n.
	\]
	Specifically, it holds that
	\begin{align*}
		\Idxstand 0=\Idx\sm1=\Idx\term1&=\finset1,\\
		\Idxstand 1=\Idx\sm2=\Idx\term2&=\finset{1,2,3,4,6},\\
		\Idxstand 2=\Idx\sm 3=\Idx\term3&=\sset{m\in\Zahlen_{\ge1}}{\varphi\paren m\le20}\setminus\finset{60}.
	\end{align*}
\end{prop}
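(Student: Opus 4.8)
The strategy is to prove, for each $n\in\finset{1,2,3}$, the three inclusions
\[
	\Idx\term n\subseteq S_n,\qquad S_n\subseteq\Idx\sm n,\qquad S_n\subseteq\Idxstand{n-1},
\]
where $S_n$ denotes the explicit set appearing in the statement. Combined with the inclusions $\Idx\sm n\subseteq\Idx\term n$ and $\Idxstand{n-1}\subseteq\Idx\term n$ (the latter from \cref{klt st subseteq term}), which are already available, the first two inclusions yield $S_n\subseteq\Idx\sm n\subseteq\Idx\term n\subseteq S_n$, and the third yields $S_n\subseteq\Idxstand{n-1}\subseteq\Idx\term n=S_n$. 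Hence all three sets coincide with $S_n$, which is exactly the assertion.

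The cases $n\le2$ are settled by the classification of surfaces. For $n=1$ every set is $\finset1$: a terminal, hence smooth, Calabi--Yau curve is an elliptic curve of index $1$, and $\Idxstand0=\finset1$ by definition. For $n=2$ a terminal surface is smooth, and a smooth Calabi--Yau surface is K3, abelian, Enriques, or bielliptic, of index $1$, $1$, $2$, and (according to type) lying in $\finset{2,3,4,6}$, respectively; this gives $\Idx\term2\subseteq S_2=\finset{1,2,3,4,6}$ and realizes $S_2\subseteq\Idx\sm2$. The same set is realized in $\Idxstand1$ by the orbifold curves $\paren{\projsp1,B}$ with $B$ of standard coefficients and $K_{\projsp1}+B\lineqQ0$: the weight tuples $(3,3,3),(2,4,4),(2,3,6),(2,2,2,2)$ give indices $3,4,6,2$, and an elliptic curve gives $1$.

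For $n=3$ the two realization inclusions proceed through K3 surfaces. For each $m$ with $\varphi(m)\le20$ and $m\ne60$ there is a K3 surface $W$ with an automorphism $g$ of order $m$ acting on $H^0\paren{W,\difsh W2}$ by a primitive $m$-th root of unity $\zeta_m$. Quotienting $W$ by $\gengp g$ produces, via \cref{quot and index}, a klt Calabi--Yau surface pair with standard coefficients and index $m$, so that $S_3\subseteq\Idxstand2$. Choosing in addition an elliptic curve $E$ with a translation $t$ of exact order $m$ and forming the free quotient $\paren{W\times E}/\gengp{(g,t)}$ yields a smooth Calabi--Yau $3$-fold of index $m$, since the deck transformation acts on $H^0\paren{W,\difsh W2}\otimes H^0\paren{E,K_E}$ by $\zeta_m$ ($t$ acting trivially on $H^0\paren{E,K_E}$); thus $S_3\subseteq\Idx\sm3$.

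The main obstacle is the upper bound $\Idx\term3\subseteq S_3$. For smooth $3$-folds I would argue via the Beauville--Bogomolov decomposition: a fixed-point-free generator of the deck group of the decomposition cover acts on an abelian, strict Calabi--Yau, or K3-type factor, where \cref{abelian phi<2n} forces $\varphi(m)\le6$ on abelian factors, \cref{index of aut of no fixed points} forces index $1$ on odd-dimensional strict Calabi--Yau factors, and the K3-type factors are governed by the orders of non-symplectic automorphisms of K3 surfaces, giving $\Idx\sm3\subseteq S_3$. Extending the bound to $\Idx\term3$ is the hard part: it requires the analogous decomposition for the index-$1$ cover of a terminal $3$-fold in the singular category, where the deck automorphism is only quasi-\'etale. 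This reduction, together with the classification of K3 non-symplectic orders---the transcendental lattice of a K3 surface has rank at most $21$, which forces $\varphi(m)\le20$, while $m=60$ is eliminated by a finer lattice-theoretic analysis---and the verification that no least common multiple of factor contributions reintroduces $60$, constitute the technical heart of the argument, and are precisely where I would invoke the work of Machida and Oguiso.
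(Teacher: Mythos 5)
Your proposal is correct and takes essentially the same approach as the paper: the cases $n\le2$ follow from the classification of curves and surfaces together with the explicit boundary pairs on $\projsp1$, and the case $n=3$ rests, exactly as in the paper, on Machida--Oguiso's results (existence of K3 automorphisms of index $m$ for every $m$ with $\varphi\paren m\le20$, $m\ne60$, fed into \cref{quot and index}, plus their determination of the $3$-fold index sets), with the loop closed by \cref{klt st subseteq term}. The only minor difference is that you realize the inclusion into $\Idx\sm3$ by an explicit free quotient $\paren{W\times E}/\gengp{(g,t)}$ and sketch how the hard upper bound on $\Idx\term3$ would go before deferring it, whereas the paper imports both equalities $\Idx\sm3=\Idx\term3=\sset{m\in\Zahlen_{\ge1}}{\varphi\paren m\le20}\setminus\finset{60}$ wholesale from Machida--Oguiso's Corollary 5.
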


\begin{proof}
	By definition, $\Idxstand0=\finset1$.
	Both smooth Calabi--Yau curves and terminal Calabi--Yau curves are smooth elliptic curves, and their indices are $1$.
	
	Let $\paren{X,B}$ be a klt Calabi--Yau curve with standard coefficients.
	Then $X$ is either a smooth elliptic curve or a smooth rational curve.
	If $X$ is elliptic, then $B=0$ and the index of $\paren{X,B}$ is $1$.
	If $X=\projsp1$, we can easily see that $\paren{X,B}$ is one of the following:
	\begin{align*}
		&\Paren{\projsp1,\frac12\paren{P_1+P_2+P_3+P_4}},
		&&\Paren{\projsp1,\frac23\paren{P_1+P_2+P_3}},\\
		&\Paren{\projsp1,\frac12P_1+\frac34\paren{P_2+P_3}},
		&&\Paren{\projsp1,\frac12P_1+\frac23P_2+\frac56P_3},
	\end{align*}
	where the $P_i$ are general points.
	Therefore the index of $\paren{X,B}$ is $2$, $3$, $4$ or $6$.
	This shows that $\Idxstand1=\finset{1,2,3,4,6}$.
	
	Since terminal surfaces are smooth, we see that $\Idx\sm2=\Idx\term2$.
	Moreover, the well-known classification of smooth projective surfaces
	\cite[Chapter VIII]{beauville-book} shows that
	$\Idx\sm2=\finset{1,2,3,4,6}$.
	
	By \cite[Main Theorem 3]{machida-oguiso}, for each integer $m\ge1$ with $\varphi\paren m\le20$ and $m\ne60$, there exist a K3 surface $X$ and an automorphism $g$ of $X$ of finite order with index $m$.
	This together with \cref{quot and index} shows that
	\[
		\sset{m\in\Zahlen_{\ge1}}{\varphi\paren m\le20}\setminus\finset{60}\subseteq\Idxstand2.
	\]
	Moreover, by \cite[Corollary 5]{machida-oguiso}, we have that
	\[
		\Idx\sm3
		=\Idx\term3
		=\sset{m\in\Zahlen_{\ge1}}{\varphi\paren m\le20}\setminus\finset{60}.
	\]
	Now we conclude the proof by \cref{klt st subseteq term}.
\end{proof}

\section{Existence of Calabi--Yau pairs with small indices}
\label{existence of pairs}

In this section, we prove \cref{intro small index}, which plays an important role in \cref{indices of sm cy}.
First, we prove two propositions on the existence of Calabi--Yau pairs with given indices:

\begin{prop}
	\label{index prime}
	Let $m\ge5$ be an odd number.
	\begin{enumerate}
		\item Assume $m\equiv1\pmod4$.
			Then $m\in\Idxstand{\paren{m+3}/4}$.
		\item Assume $m\equiv3\pmod4$.
			Then $m\in\Idxstand{\paren{m+1}/4}$.
	\end{enumerate}
\end{prop}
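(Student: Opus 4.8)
The plan is to realize $m$ as the index of an explicit klt Calabi--Yau pair of the form $\Paren{\wproj{a_0,\dotsc,a_d},\Paren{1-1/m}S}$, where $\wproj{a_0,\dotsc,a_d}$ is a well-formed weighted projective space of dimension $d=\rounddown{m/4}+1$ (which equals $(m+3)/4$ when $m\equiv1\pmod4$ and $(m+1)/4$ when $m\equiv3\pmod4$) and $S$ is an irreducible quasi-smooth hypersurface of degree $me'$ for a suitable $e'\ge1$. The weights will be chosen so that $\sum_i a_i=(m-1)e'$, which forces $K_X+\Paren{1-1/m}S\lineqQ0$. Since $\Coeff\Paren{\Paren{1-1/m}S}=\finset{1-1/m}\subseteq\Phist$, this is a Calabi--Yau pair with standard coefficients of the required dimension, and the whole proposition reduces to producing such a space and hypersurface in each residue class. (Alternatively one could take a well-formed weighted Fermat Calabi--Yau variety and quotient by a diagonal automorphism of index $m$ via \cref{quot and index}; the arithmetic obstruction is the same.)

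Two points will then be essentially automatic. First, the index is exactly $m$: choosing $S$ general so that it is not a component of the toric representative of $K_X$, the coefficient of the prime divisor $S$ in $m_0\Paren{K_X+\Paren{1-1/m}S}$ equals $m_0\Paren{1-1/m}$, which is integral only when $m\mid m_0$; and for $m_0=m$ the class of $m\Paren{K_X+\Paren{1-1/m}S}$ is $0$ in $\operatorname{Cl}(X)=\Zahlen$, hence linearly trivial, so $\idx{X,\Paren{1-1/m}S}=m$. Second, the pair is klt: since $S$ is quasi-smooth its affine cone $S'\subseteq\affsp{d+1}$ is smooth away from the origin, so $\Paren{\affsp{d+1},\Paren{1-1/m}S'}$ is log smooth with a coefficient $<1$ outside the origin and thus klt there, whence $\Paren{X,\Paren{1-1/m}S}$ is klt by \cref{klt for wps}, using well-formedness through \cref{well-formed}.

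The heart of the argument, and the origin of the two cases, is the explicit choice of the weights $a_0,\dotsc,a_d$: I must exhibit $\rounddown{m/4}+2$ positive integers summing to $(m-1)e'$ that define a well-formed weighted projective space carrying a quasi-smooth hypersurface of degree $me'$. Because the number of weights is about $m/4$ while their sum is about $m$, the weights are forced to be small (on average about $4$) against a large degree, so none of the bulk variables can carry a pure power $x_i^{me'/a_i}$; instead each must be linked to another variable through a monomial $x_i^{n_i}x_j$ of degree $me'$. The existence of such a linking monomial imposes a congruence between $a_i$, $a_j$ and $m$ modulo small integers, and it is precisely this congruence that behaves differently for $m\equiv1$ and $m\equiv3\pmod4$. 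Accordingly I will write down two weight patterns, one for each residue class, arranged so that every coordinate stratum of the ambient space meets $S$ quasi-smoothly; the parameter $e'$ is available to clear the divisibility conditions when $e'=1$ does not already suffice.

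The step I expect to be the main obstacle is exactly this quasi-smoothness verification. The danger is that a coordinate stratum on which several light variables vanish lies entirely inside $S$ and is singular there, which occurs when too many heavy variables are forced to link through too few light ones; avoiding this is what pins down the precise weight pattern. I would handle it either by arranging the monomials of $S$ into a chain/loop (Berglund--Hübsch) invertible polynomial, whose hypersurface is automatically quasi-smooth and whose weights one then reads off and matches to $(m-1)e'$ and to the dimension $\rounddown{m/4}+1$, or by checking the Iano--Fletcher combinatorial criterion stratum by stratum for the explicit weights so that $\Supp$ of every partial never vanishes simultaneously away from the origin. Once quasi-smoothness and well-formedness are confirmed, the irreducibility of $S$ together with the index and klt computations above yields $m\in\Idxstand{\rounddown{m/4}+1}$, which is the assertion of both cases.
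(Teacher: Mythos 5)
Your overall framework---weighted projective ambient space, kltness via the affine-cone criterion of \cref{klt for wps} together with \cref{well-formed}, and the index read off from coefficient arithmetic in $\operatorname{Cl}(X)\cong\Zahlen$---is sound and agrees with the paper's. But the proof has a genuine gap exactly where you predict it: the weights and the hypersurface are never produced, and the single-irreducible-divisor formulation you commit to is not merely unfinished, it runs into a concrete obstruction. Take $e'=1$ and $m\equiv1\pmod4$, so you need about $\paren{m+7}/4$ weights summing to $m-1$ (average about $4$) and a quasi-smooth $S$ of odd degree $m$. A variable of weight $4$ admits no pure power of degree $m$, so it needs a monomial $x_i^cx_j$ with $4c+a_j=m$, forcing $a_j\equiv m\equiv1\pmod4$; thus the many even-weight variables must link to the few odd-weight ones. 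When two of them are forced to link to the same variable, quasi-smoothness fails away from the cone point: for the natural candidate with weights $\paren{4^{\paren{n-2}},2,1,1}$, $n=\paren{m+3}/4$, and
\[
	f=\sum_{i=0}^{n-3}x_i^{(m-1)/4}x_{n-1}+x_{n-2}^{(m-1)/2}x_n+x_{n-1}^m+x_n^m,
\]
every point of the affine cone with $x_{n-2}=x_{n-1}=x_n=0$ and $\sum_{i}x_i^{(m-1)/4}=0$ is a singular point of $\hypsurf{f=0}$, and nonzero such points exist as soon as $n\ge4$. Choosing $e'>1$ changes the congruences but not this collision problem; neither your Berglund--H\"ubsch nor your Iano--Fletcher fallback is actually carried out; and your alternative route (quotienting an index-$1$ Calabi--Yau by an automorphism of index $m$ via \cref{quot and index}) again requires an unconstructed variety of dimension about $m/4$ carrying an automorphism of index exactly $m$. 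So the ``essentially automatic'' parts of your argument are fine, but the heart of the proposition is missing.

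The idea you are missing, and the way the paper escapes the obstruction, is to drop irreducibility of the boundary and distribute the coefficient $\paren{m-1}/m$ over many components of \emph{small} degree. For $m\equiv1\pmod4$ the paper takes $X=\wproj{4^{\paren{n-2}},2,1,1}$ with $n=\paren{m+3}/4$ and $B=\frac{m-1}m\Paren{\sum_{i=0}^{n-3}H_i+H_n+H}$, where the $H_i$ and $H_n$ are coordinate hyperplanes and $H=\hypsurf{x_0+\dotsb+x_{n-3}+x_{n-2}^2+x_{n-1}^4+x_n^4=0}$ has degree only $4$; the degrees of the components sum to $4\paren{n-2}+1+4=m$, so $K_X+B\lineqQ0$ and the index is exactly $m$, while the support of the affine cone of $B$ is simple normal crossing away from the origin (the verification reduces to a three-variable computation), so kltness follows from \cref{klt for wps} without any quasi-smoothness of a high-degree hypersurface. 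The case $m\equiv3\pmod4$ is identical with weights $\paren{4^{\paren{n-2}},3,2,1}$ and $n=\paren{m+1}/4$.
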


\begin{proof}
	(1) Let $n=\paren{m+3}/4$.
	We define
	\[
		X=\wproj{4^{\paren{n-2}},2,1,1},\quad
		B=\frac{m-1}m\Paren{\sum_{i=0}^{n-3}H_i+H_n+H},
	\]
	where $H_i=\hypsurf{x_i=0}$ and $H=\hypsurf{x_0+\dotsb+x_{n-3}+x_{n-2}^2+x_{n-1}^4+x_n^4=0}$ are prime divisors on $X$, and $x_0,\dotsc,x_n$ are the homogeneous coordinates of $X$.
	Note that the weighted projective space $X$ is well-formed and the pair $\paren{X,B}$ has standard coefficients.

	Assume that $K_X+B$ is $\Quot$-linearly equivalent to $\mathcal O_X(d)$.
	Then
	\begin{align*}
		d
		&=-(4(n-2)+2+1+1)+\frac{m-1}m(4(n-2)+1+4)\\
		&=1-\frac{4n-3}m\\
		&=0.
	\end{align*}
	Therefore, $K_X+B\lineqQ0$.
	Moreover, from the definition of $B$, the index of $(X,B)$ is $m$.

	Finally, we prove that $(X,B)$ is klt.
	By \cref{klt for wps}, it suffices to show that the affine cone $(\affsp{n+1},B')$ is klt outside the origin.
	Here
	\[
		B'=\frac{m-1}m\Paren{\sum_{i=0}^{n-3}H'_i+H'_n+H'},
	\]
	where the prime divisors $H'_i$ and $H'$ on $\affsp{n+1}$ are defined by the same equations as those defining $H_i$ and $H$, respectively.
	In fact, we prove the stronger claim that $B'$ has simple normal crossing support outside the origin.
	Since the support is clearly simple normal crossing outside $\bigcap_{i=0}^{n-3}H'_i$,
	it suffices to show that the divisor
	\[
		\finset{x_n=0}+\finset{x_{n-2}^2+x_{n-1}^4+x_n^4=0}\quad\text{on}\quad\affsp3
	\]
	is simple normal crossing outside the origin.
	This can be seen by checking that the surface $\finset{x_{n-2}^2+x_{n-1}^4+x_n^4=0}$ in $\affsp3$ and the curve $\finset{x_{n-2}^2+x_{n-1}^4=0}$ in $\affsp2$ are smooth outside the origin.

	(2) Let $n=\paren{m+1}/4$.
	We define
	\[
		X=\wproj{4^{\paren{n-2}},3,2,1},\quad
		B=\frac{m-1}m\Paren{\sum_{i=0}^{n-2}H_i+H},
	\]
	where $H_i=\hypsurf{x_i=0}$ and $H=\hypsurf{x_0+\dotsb+x_{n-3}+x_{n-2}x_{n}+x_{n-1}^2+x_n^4=0}$ are prime divisors on $X$, and $x_0,\dotsc,x_n$ are the homogeneous coordinates of $X$.
	Note that the weighted projective space $X$ is well-formed and the pair $(X,B)$ has standard coefficients.

	Assume that $K_X+B$ is $\Quot$-linearly equivalent to $\mathcal O_X(d)$.
	Then 
	\begin{align*}
		d
		&=-(4(n-2)+3+2+1)+\frac{m-1}m(4(n-2)+3+4)\\
		&=1-\frac{4n-1}m\\
		&=0.
	\end{align*}
	Therefore, $K_X+B\lineqQ0$.
	Moreover, from the definition of $B$, the index of $(X,B)$ is $m$.

	Finally, we prove that $(X,B)$ is klt.
	By \cref{klt for wps}, it suffices to show that the affine cone $(\affsp{n+1},B')$ is klt outside the origin.
	Here
	\[
		B'=\frac{m-1}m\Paren{\sum_{i=0}^{n-2}H'_i+H'},
	\]
	where the prime divisors $H'_i$ and $H'$ on $\affsp{n+1}$ are defined by the same equations as those defining $H_i$ and $H$, respectively.
	In fact, we prove the stronger claim that $B'$ has simple normal crossing support outside the origin.
	Since the support is clearly simple normal crossing outside $\bigcap_{i=0}^{n-3}H'_i$, it suffices to show that the divisor
	\[
		\finset{x_{n-2}=0}+\finset{x_{n-2}x_n+x_{n-1}^2+x_n^4=0}\quad\text{on}\quad\affsp3
	\]
	is simple normal crossing outside the origin.
	This can be seen by checking that the surface $\finset{x_{n-2}x_n+x_{n-1}^2+x_n^4=0}$ in $\affsp3$ and the curve $\finset{x_{n-1}^2+x_n^4=0}$ in $\affsp2$ are smooth outside the origin.
\end{proof}

\begin{prop}
	\label{index power of prime}
	Let $m,e\ge2$ be integers.
	Then $m^e\in\Idxstand{m+e-3}$.
\end{prop}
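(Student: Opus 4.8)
The plan is to realize $m^e$ as the index of an explicit klt Calabi--Yau pair $\paren{X,B}$ supported on a well-formed weighted projective space, in direct analogy with \cref{index prime}. I would take
\[
	X=\wproj{a_0,\dotsc,a_N},\qquad N=m+e-3,\qquad B=\frac{m^e-1}{m^e}H,
\]
where $H=\hypsurf{f=0}$ is a single quasi-smooth hypersurface of degree $m^e$ and $x_0,\dotsc,x_N$ are the homogeneous coordinates. Note that $\frac{m^e-1}{m^e}=1-1/m^e$ is a standard coefficient, so $B$ has standard coefficients. The weights are subject to the single numerical constraint $\sum_i a_i=m^e-1$, and I would arrange the $N+1=m+e-2$ of them so that this holds.

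Granting such a pair, the three required properties are immediate. First, writing $K_X+B\lineqQ\mathcal O_X(d)$, the degree computation
\[
	d=-\sum_i a_i+\frac{m^e-1}{m^e}\cdot m^e=-\paren{m^e-1}+\paren{m^e-1}=0
\]
shows $K_X+B\lineqQ0$. Second, since $X$ is well-formed its class group is isomorphic to $\Zahlen$, hence torsion-free; as $K_X+B\lineqQ0$ and $K_X$ is an integral Weil divisor, the divisor $k\paren{K_X+B}$ is linearly trivial exactly when it is integral, i.e.\ when $m^e\mid k$. Thus the index of $\paren{X,B}$ equals $m^e$, read off from the denominator of the coefficient as in \cref{index prime}. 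Third, for klt-ness I would apply \cref{klt for wps}: it suffices that the affine cone $\Paren{\affsp{m+e-2},\frac{m^e-1}{m^e}\hypsurf{f=0}}$ is klt outside the origin, and since the coefficient is $<1$ this reduces to showing that $\hypsurf{f=0}$ is smooth away from the origin, that is, that $H$ is quasi-smooth.

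The weights and the polynomial $f$ are where the real work lies, and the main obstacle is to guarantee quasi-smoothness. A natural choice of weights includes the divisors $1,m,m^2,\dotsc,m^{e-1}$ of $m^e$, which contribute the pure powers $x^{m^{e-i}}$ to $f$; the partial derivative of such a term vanishes only when the corresponding coordinate does, so these variables never obstruct quasi-smoothness. The remaining $m-2$ weights must sum to $\paren{m^e-1}\frac{m-2}{m-1}$ and—at least when $m$ is prime—do not divide $m^e$, so their variables can enter $f$ only through linked monomials such as $x_0u^{m-1}$ (of degree $1+\paren{m-1}w=m^e$ for a weight-$w$ variable $u$). The difficulty is that attaching several extra variables to the same coordinate produces a positive-dimensional singular locus of $\hypsurf{f=0}$ away from the origin; for example, two equal extra weights $w$ with no monomial in those variables alone force $f$ to vanish identically on the corresponding $\wproj{w,w}$. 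I expect the remedy to be a chain of linked monomials whose weights satisfy a recurrence in the spirit of the sequence $s_n$ of Esser, Totaro and Wang, chosen so that the only common zero of $\partial f/\partial x_0,\dotsc,\partial f/\partial x_N$ is the origin. Once quasi-smoothness is in hand, the klt verification reduces—exactly as in \cref{index prime}—to checking that a few explicit low-dimensional hypersurfaces and curves are smooth outside the origin. Finally, I would treat the degenerate case $m=2$ separately, since there the naive weights $1,2,\dotsc,2^{e-1}$ fail to be well-formed and must be adjusted.
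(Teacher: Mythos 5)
Your framework is the right one---realizing $m^e$ on a well-formed weighted projective space, reading off the index from the torsion-free class group, and reducing klt-ness to the affine cone via \cref{klt for wps} are all correct moves---but the proof has a genuine gap at its central step, and you flag it yourself. By insisting that $B$ consist of a \emph{single} component with coefficient $1-1/m^e$, you force $\deg H$ to be divisible by $m^e$, hence $\sum_i a_i=m^e-1$ spread over only $m+e-2$ weights; you must then exhibit a quasi-smooth hypersurface of degree $m^e$ for these enormous weights, and this is never done. Your sketch (weights $1,m,\dotsc,m^{e-1}$ together with $m-2$ copies of $w=\paren{m^e-1}/\paren{m-1}$, linked monomials, ``a chain \dots\ in the spirit of Esser--Totaro--Wang'') is an expectation, not an argument: as you yourself observe, several weight-$w$ variables linked to the same weight-one variable create a positive-dimensional singular locus away from the origin, and no choice of chain is exhibited or verified. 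Moreover the case $m=2$, where your proposed weights are not even well-formed, is deferred. What remains unproven is thus precisely the existence statement that the proposition asserts.

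The paper sidesteps this difficulty entirely by not requiring a single boundary component: it takes
\[
	X=\wproj{\paren{m-1}^{\paren{e-1}},1^{\paren{m-1}}},\qquad
	B=\frac{m^e-1}{m^e}H+\sum_{i=0}^{e-1}\frac{m^{i+1}-1}{m^{i+1}}H_i,
\]
with $H_i=\hypsurf{x_i=0}$ and $H$ a Fermat-type hypersurface of degree only $m-1$. The index is still $m^e$ because it is governed by the least common multiple of the denominators $m,m^2,\dotsc,m^e$ of the several standard coefficients, not by a single coefficient, while every divisor involved has small degree; consequently the affine cone has simple normal crossing support outside the origin, and both the Calabi--Yau and klt verifications are elementary. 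To rescue your single-component approach you would have to prove a quasi-smoothness existence result of Esser--Totaro--Wang type, which is substantially harder than the proposition itself; the lesson of the paper's construction is that distributing the index across several standard coefficients is exactly what keeps the geometry trivial.
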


\begin{proof}
	Let
	\[
		X=\wproj{\paren{m-1}^{\paren{e-1}}, 1^{\paren{m-1}}},\quad
		B=\frac{m^e-1}{m^e}H+\sum_{i=0}^{e-1}\frac{m^{i+1}-1}{m^{i+1}}H_i,
	\]
	where $H_i=\hypsurf{x_i=0}$ and $H=\hypsurf{x_0+\dotsb+x_{e-2}+x_{e-1}^{m-1}+\dotsb+x_{e+m-3}^{m-1}=0}$ are prime divisors on $X$, and $x_0,\dotsc,x_{e+m-3}$ are the homogeneous coordinates of $X$.
	Note that the weighted projective space $X$ is well-formed and the pair $(X,B)$ has standard coefficients.

	Assume that $K_X+B$ is $\Quot$-linearly equivalent to $\mathcal O_X(d)$.
	Then
	\begin{align*}
		d
		&=-((m-1)(e-1)+(m-1))\\
			&\quad+\frac{m^e-1}{m^e}(m-1)+\sum_{i=0}^{e-2}\frac{m^{i+1}-1}{m^{i+1}}(m-1)+\frac{m^e-1}{m^e}\\
		&=-\frac{m-1}{m^e}-\sum_{i=0}^{e-2}\frac{m-1}{m^{i+1}}+\frac{m^e-1}{m^e}\\
		&=-(m-1)\sum_{i=0}^{e-1}\frac1{m^{i+1}}+\frac{m^e-1}{m^e}\\
		&=0.
	\end{align*}
	Therefore, $K_X+B\lineqQ0$.
	Moreover, from the definition of $B$, the index of $(X,B)$ is $m^e$.

	Finally, we prove that $(X,B)$ is klt.
	By \cref{klt for wps}, it suffices to show that the affine cone $(\affsp{m+e-2},B')$ is klt outside the origin.
	Here
	\[
		B'=\frac{m^e-1}{m^e}H'+\sum_{i=0}^{e-1}\frac{m^{i+1}-1}{m^{i+1}}H'_i,
	\]
	where the prime divisors $H'_i$ and $H'$ on $\affsp{m+e-2}$ are defined by the same equations as those defining $H_i$ and $H$, respectively.
	In fact, we prove the stronger claim that $B'$ has simple normal crossing support outside the origin.
	Since the support is clearly simple normal crossing outside $\bigcap_{i=0}^{e-2}H'_i$, it suffices to show that the divisor
	\[
		\finset{x_{e-1}=0}+\finset{x_{e-1}^{m-1}+\dotsb+x_{e+m-3}^{m-1}=0}
		\quad\text{on}\quad\affsp{m-1}
	\]
	is simple normal crossing outside the origin.
	This can be seen by checking that the hypersurfaces $\finset{x_{e-1}^{m-1}+\dotsb+x_{e+m-3}^{m-1}=0}$ in $\affsp{m-1}$ and $\finset{x_{e}^{m-1}+\dotsb+x_{e+m-3}^{m-1}=0}$ in $\affsp{m-2}$ are smooth outside the origin.
\end{proof}

We now prepare two inequalities on the dimension $m+e-3$ that appears in \cref{index power of prime}:

\begin{lem}
	\label{inequality}
	Let $m,e\ge2$ be integers, and let $n=\paren{m^e-m^{e-1}}/2$.
	\begin{enumerate}
		\item Assume $\paren{m,e}\ne\paren{2,2},\paren{2,3}$.
			Then it holds that
			\[
				m+e-3\le n-1.
			\]
		\item Assume $m\ge3$ and $\paren{m,e}\ne\paren{3,2}$.
			Then it holds that
			\[
				m+e-3\le n-3.
			\]
	\end{enumerate}
\end{lem}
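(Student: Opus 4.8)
The plan is to rewrite both claims as explicit lower bounds on $n=m^{e-1}\paren{m-1}/2$: part (1) is equivalent to $n\ge m+e-2$, and part (2) is equivalent to $n\ge m+e$. In both cases I would fix $m$ and argue by induction on $e$, exploiting that incrementing $e$ multiplies the left-hand side $n$ by the factor $m$, while the right-hand side grows only by $1$. So the exponential term should dominate the linear one with room to spare once the induction is started at the right place.

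For the inductive step, which is uniform across both parts, suppose $m^{e-1}\paren{m-1}/2\ge C$, where $C$ denotes the relevant linear bound (either $m+e-2$ or $m+e$). Multiplying through by $m$ gives $m^{e}\paren{m-1}/2\ge mC\ge2C\ge C+1$, the last inequality holding because $C\ge1$ in every case under consideration; here I only use $m\ge2$. Since replacing $e$ by $e+1$ raises the linear bound by exactly $1$, this establishes the inequality for $e+1$. Thus the whole statement reduces to choosing the base cases correctly.

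For part (1) I would take $e=2$ as the base case when $m\ge3$, where $m\paren{m-1}/2\ge m$ is equivalent to $m\ge3$; for $m=2$ the cases $e=2$ and $e=3$ fail, and these are precisely the excluded pairs, so I would instead start the induction at $e=4$, where $n=2^{2}=4=m+e-2$. For part (2), since $m\ge3$ is assumed, I would take $e=2$ as the base when $m\ge4$, where $m\paren{m-1}/2\ge m+2$ is equivalent to $\paren{m-4}\paren{m+1}\ge0$; for $m=3$ the case $e=2$ fails, which is exactly the excluded pair $\paren{3,2}$, so I would start the induction at $e=3$, where $n=3^{2}=9\ge6=m+e$.

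The only genuine content is the bookkeeping of base cases: one must confirm that the excluded pairs are exactly where the inequality first fails, and that starting the induction one step later still captures all remaining $e$. Because the inductive step above is valid for every $m\ge2$ and every $C\ge1$, no subtlety arises there, and I do not expect a real obstacle. The estimate is elementary, and the main care needed is simply to match the failing base cases against the pairs excluded in the statement.
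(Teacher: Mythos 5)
Your proof is correct and takes essentially the same route as the paper: induction on $e$ with base cases matched exactly to the excluded pairs, using that multiplication by $m\ge2$ outpaces the linear growth of the bound. The only organizational difference is that the paper proves (2) first and deduces most of (1) from it (treating $m=2$ and $(m,e)=(3,2)$ directly), whereas you run two parallel inductions with the uniform step $mC\ge 2C\ge C+1$; the underlying estimate is the same.
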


\begin{proof}
	It suffices to show the following:
	\begin{enumerate}
		\item $m^e-m^{e-1}-2m-2e+4\ge0$ if $\paren{m,e}\ne\paren{2,2},\paren{2,3}$, and
		\item $m^e-m^{e-1}-2m-2e\ge0$ if $m\ge3$ and $\paren{m,e}\ne\paren{3,2}$.
	\end{enumerate}
	
	First show (2) by induction on $e$.
	If $e=2$, then as $m\ge4$, we have
	\[
		m^e-m^{e-1}-2m-2e=(m+1)(m-4)\ge0.
	\]
	If $m=e=3$, then $m^e-m^{e-1}-2m-2e=6\ge0$.
	Now assume $e\ge3$ if $m\ge4$, and $e\ge4$ if $m=3$.
	Using the induction hypothesis, we see that
	\begin{align*}
		m^e-m^{e-1}-2m-2e
		&=m(m^{e-1}-m^{e-2})-2m-2e\\
		&\ge m(2m+2(e-1))-2m-2e\\
		&=2m\paren{m-2}+2\paren{m-1}e\\
		&\ge0.
	\end{align*}
	
	Next we prove (1).
	By the inequality (2), we may assume that $m=2$ or $\paren{m,e}=\paren{3,2}$.
	If $m=2$, then as $e\ge4$,
	\[
		m^e-m^{e-1}-2m-2e+4=2^{e-1}-2e\ge0.
	\]
	If $\paren{m,e}=\paren{3,2}$, then $m^e-m^{e-1}-2m-2e+4=0$.
\end{proof}

Finally we prove \cref{intro small index}:

\begin{thm}
	\label{phi<2n and index}
	Let $n\ge3$ be an integer.
	Then
	\[
		\sset{m\in\Zahlen_{\ge1}}{\varphi\paren m\le2n}
		\subseteq
		\Idxstand{n-1},
	\]
	where $\varphi$ is the Euler function (see \cref{euler}).
\end{thm}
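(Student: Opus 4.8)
The plan is to reduce the statement to a claim about a single integer $m$ and then to prove it by factoring $m$ into prime powers and reassembling realizations via least common multiples. Since $\Idxstand{n-1}\subseteq\Idxstand n$ by \cref{properties of Idx}(1), while the hypothesis $\varphi(m)\le2n$ only weakens as $n$ grows, I may work with the smallest admissible $n$; in practice I would split according to the size of $\varphi(m)$ and, in each case, show directly that $m\in\Idxstand{n-1}$ for the given $n\ge3$.

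First I would dispose of the range $\varphi(m)\le20$ using the explicit computation $\Idxstand2=\sset{k\in\Zahlen_{\ge1}}{\varphi(k)\le20}\setminus\finset{60}$ from \cref{index set in low dimension}. If $m\ne60$ this gives $m\in\Idxstand2\subseteq\Idxstand{n-1}$ since $n\ge3$. For the single exception $m=60$ the hypothesis forces $n\ge8$, and writing $60=\lcm{3,4,5}$ with $3,4\in\Idxstand1$ and $5\in\Idxstand2$, \cref{properties of Idx}(2) yields $60\in\Idxstand4\subseteq\Idxstand{n-1}$.

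The substantial case is $\varphi(m)>20$, where it suffices to prove the sharper inclusion $m\in\Idxstand{\varphi(m)/2-1}$. I would factor $m=\lcm{q_1,\dotsc,q_t}$ into pairwise coprime prime powers, so that $\varphi(m)=\prod_i\varphi(q_i)$; iterating \cref{properties of Idx}(2) then places $m$ in $\Idxstand{\sum_id_i}$ as soon as $q_i\in\Idxstand{d_i}$, reducing everything to the arithmetic bound $\sum_id_i\le\varphi(m)/2-1$. The per-factor input I would establish is twofold: every prime power $q\ne2$ satisfies $q\in\Idxstand{\varphi(q)/2}$, and every prime power $q$ with $\varphi(q)>20$ satisfies the stronger $q\in\Idxstand{\varphi(q)/2-1}$. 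The first follows from \cref{index set in low dimension} when $\varphi(q)\le20$ (for instance $3,4\in\Idxstand1$ and $5,8,9\in\Idxstand2$); the second follows from \cref{index power of prime} together with \cref{inequality}(1) when $q=p^e$ with $e\ge2$ (any prime $p$), and from \cref{index prime} together with the elementary inequality $(p+3)/4\le(p-1)/2-1$, valid for $p\ge9$, when $q$ is a prime. The one prime power for which the clean bound fails is $q=2$, where $\varphi(2)=1$ but the best available dimension is $1$; tracking this single unit of excess is the delicate point.

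The remaining inequality $\sum_id_i\le\varphi(m)/2-1$ is the heart of the matter, and I would dispatch it by the number of factors. If $m$ is a single prime power, the sharper per-factor bound gives the conclusion outright. If $m$ has at least two coprime prime-power factors, let $\mu_1,\dotsc,\mu_k$ be the $\varphi$-values that are $\ge2$. When $k\ge2$ I use the crude bound $d_i\le\varphi(q_i)/2$ for each factor and rely on the gap $\prod_j\mu_j-\sum_j\mu_j$; since $\prod_j\mu_j=\varphi(m)>20$, an elementary estimate shows this gap is at least $4$, which is exactly enough to cover the required $-1$ together with the extra $+1$ in dimension incurred when $2$ divides $m$ but $4$ does not. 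The genuinely tight configuration is $m=2q$ with $q$ an odd prime power: here I pay $d(2)=1$ on top of $d(q)$ and must show $d(q)\le\varphi(q)/2-2$, which is precisely what \cref{inequality}(2) delivers for $q=p^e$ with $e\ge2$, and what the estimates $(p-5)/4,(p-3)/4\ge2$ coming from \cref{index prime} deliver for $q$ a prime. The main obstacle, then, is this bookkeeping of the lone factor of $2$: verifying that the product-versus-sum gap and the sharpened single-factor bounds always leave just enough room to absorb it. Once that is checked, the rest is routine assembly of the cited propositions.
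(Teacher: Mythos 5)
Your proposal is correct, and it assembles the proof along a genuinely different route from the paper, though from the same building blocks (\cref{index prime,index power of prime,inequality,properties of Idx,index set in low dimension}). The paper argues by induction on $n$: after reducing to the extremal case $\varphi\paren m=2n$, it peels off only the largest prime-power factor $m_2=p_r^{e_r}$ and splits into cases according to whether $\varphi\paren{m_1},\varphi\paren{m_2}\ge6$ (where the induction hypothesis applies) or lie in $\finset{1,2,4}$ (where explicit membership in $\Idxstand1$ or $\Idxstand2$ is used). You avoid induction altogether: for $\varphi\paren m\le20$ the dimension-two classification suffices (your treatment of $m=60$, for which the hypothesis forces $n\ge8$, is correct), and for $\varphi\paren m>20$ you prove the sharper claim $m\in\Idxstand{\varphi\paren m/2-1}$ by a full factorization into coprime prime powers, per-factor dimension bounds, and the product-minus-sum gap. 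Notably, both arguments isolate the same tight configuration $m=2q$ with $q$ an odd prime power (the paper's case $\varphi\paren{m_1}=1$) and resolve it identically, via \cref{inequality}(2) when $q=p^e$ with $e\ge2$ and via the refined prime estimates from \cref{index prime} when $q$ is prime. Your route buys a cleaner and quantitatively sharper statement, compressing the paper's many subcases into one arithmetic inequality; the cost is that two asserted steps still need to be written out, namely the uniform per-factor bound $q\in\Idxstand{\varphi\paren q/2}$ for every prime power $q\ne2$, and the gap estimate itself --- both are elementary and true (for $k\ge2$ even factors $\mu_j\ge2$ one checks $\sum_j\mu_j\le\tfrac12\prod_j\mu_j+2$, so the gap is at least $10$ once $\varphi\paren m\ge24$, comfortably above the $4$ you need).
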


\begin{proof}
	We prove the theorem by induction on $n$.
	First consider the case when $n=3$.
	By \cref{index set in low dimension}, we have that
	\[
		\Idxstand2
		=\sset{m\in\Zahlen_{\ge1}}{\varphi\paren m\le20}\setminus\finset{60}.
	\]
	Since $\varphi\paren{60}=16>6$, we conclude that $\sset{m\ge1}{\varphi\paren m\le6}\subseteq\Idxstand2$.
	
	Let $n\ge4$, and let $m\ge1$ with $\varphi\paren m\le2n$.
	We will show that $m\in\Idxstand{n-1}$.
	If $\varphi(m)<2n$, then $\varphi(m)\le2(n-1)$, and thus
	\[
		m\in\Idxstand{n-2}\subseteq\Idxstand{n-1}
	\]
	by the induction hypothesis and \cref{properties of Idx}.
	Therefore we assume $\varphi(m)=2n$ in the following.
	We write $m=p_1^{e_1}\dotsm p_r^{e_r}$, where $p_1<\dotsb<p_r$ are prime numbers and $e_i\ge1$.
	
	First, assume $r=1$ and $e_1=1$, which means that $m$ is prime.
	Then $2n=\varphi\paren m=m-1$.
	By \cref{index prime}, we have
	\[
		m\in
		\begin{cases}
			\Idxstand{\paren{m+3}/4}&\text{if $m\equiv1\pmod4$},\\
			\Idxstand{\paren{m+1}/4}&\text{if $m\equiv3\pmod4$}.
		\end{cases}
	\]
	Since $n\ge4$, it follows that
	\[
		\frac{m+1}4\le\frac{m+3}4=\frac{n+2}2\le n-1.
	\]
	Therefore, by \cref{properties of Idx}, we conclude that $m\in\Idxstand{n-1}$.
	
	Assume $r=1$ and $e_1\ge2$, and write $m=p^e$.
	In this case, we have $2n=\varphi\paren m=p^e-p^{e-1}$,
	and since $n\ge4$, it follows that $p^e\ne 2^2, 2^3$.
	By \cref{index power of prime}, we obtain that $m=p^{e}\in\Idxstand{p+e-3}$.
	Furthermore, we have $p+e-3\le n-1$ by \cref{inequality}.
	Therefore, by \cref{properties of Idx}, we conclude that
	\[
		m=p^e\in\Idxstand{p+e-3}\subseteq\Idxstand{n-1}.
	\]

	Assume $r\ge2$ in the following.
	Let $m_1=p_1^{e_1}\dotsm p_{r-1}^{e_{r-1}}$ and $m_2=p_r^{e_r}$,
	so that $m=m_1m_2$ and $2n=\varphi\paren{m_1}\varphi\paren{m_2}$.
	First assume $\varphi\paren{m_i}\ge6$ for $i=1,2$.
	Then we can write $\varphi\paren{m_i}=2n_i$ with $n_i\ge3$,
	which implies that $n=2n_1n_2$.
	By the induction hypothesis, we obtain that $m_i\in\Idxstand{n_i-1}$.
	Moreover, we see that $n_1+n_2-2\le n-1$ since
	\[
		n-1-(n_1+n_2-2)
		=2n_1n_2-n_1-n_2+1
		=n_1n_2+(n_1-1)(n_2-1)\ge0.
	\]
	Therefore, by \cref{properties of Idx}, we conclude that
	\[
		m=m_1m_2\in\Idxstand{n_1+n_2-2}\subseteq\Idxstand{n-1}.
	\]
	
	It remains to show the case when $\varphi\paren{m_1}\le4$ or $\varphi\paren{m_2}\le4$.
	Note that, by \cref{index set in low dimension}, for $l\ge2$,
	\[
		\begin{tikzcd}[row sep=0em]
			\varphi\paren l=1\arrow[r,Leftrightarrow]&
				l=2\arrow[r,Rightarrow]&
					l\in\Idxstand1,\\
			\varphi\paren l=2\arrow[r,Leftrightarrow]&
				l=3,4,6\arrow[r,Rightarrow]&
					l\in\Idxstand1,\\
			\varphi\paren l=4\arrow[r,Leftrightarrow]&
				l=5,8,10,12\arrow[r,Rightarrow]&
					l\in\Idxstand2.
		\end{tikzcd}
	\]
	Moreover, since $p_r>p_1\ge2$, we have that $\varphi\paren{m_2}=\varphi(p_r^{e_r})\ge2$.

	First assume $\varphi\paren{m_1}=2n_1\ge6$ and $\varphi\paren{m_2}\le4$.
	In this case, $n=n_1\varphi(m_2)$.
	By the induction hypothesis, we have $m_1\in\Idxstand{n_1-1}$.
	If $\varphi\paren{m_2}=2$, then $n=2n_1$ and $m_2\in\Idxstand1$, and therefore by \cref{properties of Idx}, we obtain that
	\[
		m=m_1m_2\in\Idxstand{(n_1-1)+1}=\Idxstand{n_1}.
	\]
	Since $n_1=n/2\le n-1$, we conclude, by \cref{properties of Idx}, that
	\[
		m\in\Idxstand{n_1}\subseteq\Idxstand{n-1}.
	\]
	If $\varphi\paren{m_2}=4$, then $n=4n_1$ and $m_2\in\Idxstand2$, and therefore by \cref{properties of Idx}, we obtain that
	\[
		m=m_1m_2\in\Idxstand{(n_1-1)+2}=\Idxstand{n_1+1}.
	\]
	Since $n_1+1=n/4+1\le n-1$, we conclude, by \cref{properties of Idx}, that
	\[
		m\in\Idxstand{n_1+1}\subseteq\Idxstand{n-1}.
	\]
	
	It now suffices to show the case when $\varphi\paren{m_1}\le4$.
	First, assume that $\varphi\paren{m_1}=1$ (i.e., $m_1=2$), and write $m=2p^e$.
	If $e=1$, then $2n=\varphi\paren m=p-1$, and in particular, $p\ge11$ since $n\ge4$.
	Then, by \cref{index prime,properties of Idx}, we have
	\[
		m=2p\in
		\begin{cases}
			\Idxstand{1+(p+3)/4} & \text{if $p\equiv1\pmod4$},\\
			\Idxstand{1+(p+1)/4} & \text{if $p\equiv3\pmod4$}.
		\end{cases}
	\]
	We can easily check that
	\[
		n-1
		=\frac{p-3}2
		\ge
		\begin{cases}
			1+\paren{p+3}/4 &\text{if $p\ge13$},\\
			1+\paren{p+1}/4 &\text{if $p\ge11$}.
		\end{cases}
	\]
	Thus, by \cref{properties of Idx}, we conclude that
	\[
		m\in\Idxstand{n-1}.
	\]
	If $e\ge2$, then $2n=\varphi(p^e)=p^e-p^{e-1}$, and in particular, $p^e\ne3^2$ since $n\ge4$.
	Then by \cref{index power of prime,properties of Idx}, we have
	\[
		m=2p^e\in\Idxstand{1+(p+e-3)}=\Idxstand{p+e-2}.
	\]
	Since $p\ge3$ and $p^e\ne3^2$, we get $p+e-2\le n-1$ by \cref{inequality}.
	Hence by \cref{properties of Idx}, we conclude that
	\[
		m\in\Idxstand{p+e-2}\subseteq\Idxstand{n-1}.
	\]
	
	Assume $\varphi\paren{m_1}=2$.
	In this case, $n=\varphi\paren{m_2}$ and $m_1\in\Idxstand1$.
	In particular, $\varphi\paren{m_2}\ge4$.
	If $\varphi\paren{m_2}=2n_2\ge6$, then the induction hypothesis implies that $m_2\in\Idxstand{n_2-1}$.
	Since $n_2=n/2\le n-1$, we conclude that
	\[
		m=m_1m_2\in\Idxstand{1+(n_2-1)}\subseteq\Idxstand{n-1}
	\]
	by \cref{properties of Idx}.
	If $\varphi\paren{m_2}=4$, then since $m_2\in\Idxstand2$, we conclude that
	\[
		m=m_1m_2\in\Idxstand{1+2}=\Idxstand{n-1}
	\]
	by \cref{properties of Idx}.
	
	Finally, assume that $\varphi\paren{m_1}=4$.
	Then $m_1\in\Idxstand2$ and $n=2\varphi\paren{m_2}$.
	If $\varphi\paren{m_2}=2n_2\ge6$, then by the induction hypothesis, we get $m_2\in\Idxstand{n_2-1}$.
	Note that $n_2+1=n/4+1\le n-1$.
	Thus, by \cref{properties of Idx},we conclude that
	\[
		m=m_1m_2\in\Idxstand{2+(n_2-1)}\subseteq\Idxstand{n-1}.
	\]
	If $\varphi\paren{m_2}=4$, then since $m_2\in\Idxstand2$, we have that
	\[
		m=m_1m_2\in\Idxstand{2+2}\subseteq\Idxstand7=\Idxstand{n-1}
	\]
	by \cref{properties of Idx}.
	If $\varphi\paren{m_2}=2$, then since $m_2\in\Idxstand1$, we conclude that
	\[
		m=m_1m_2\in\Idxstand{2+1}=\Idxstand{n-1}. \qedhere
	\]
\end{proof}

\begin{rem}
	\Cref{phi<2n and index} does not hold for $n=1,2$,
	as can be checked using \cref{index set in low dimension}.
\end{rem}

\section{Indices of smooth Calabi--Yau varieties}
\label{indices of sm cy}

In this section, we prove the first inclusion in \cref{intro main thm}, which is the main part of the theorem:

\begin{thm}
	\label{index set thm}
	Let $n\ge1$ be an integer.
	Then it holds that
	\[
		\Idx\sm n\subseteq\Idxstand{n-1}.
	\]
\end{thm}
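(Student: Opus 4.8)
The plan is to argue by induction on $n$, taking \cref{index set in low dimension} as the base cases $n\le3$ (where in fact equality holds). So assume $n\ge4$ and that $\Idx\sm{n'}\subseteq\Idxstand{n'-1}$ for every $1\le n'<n$. Fix a smooth Calabi--Yau variety $X$ of dimension $n$ with $\idx X=m$; I want $m\in\Idxstand{n-1}$. First I would take a Beauville--Bogomolov cover $f\colon\tilde X\to X$ from \cref{decomposition} and pass to its Galois closure, so that $f$ is finite \'etale and Galois with group $G$ acting freely on $\tilde X=\prod_iY_i\times\prod_jZ_j\times A$, with $X=\tilde X/G$. Since $f$ is \'etale we have $K_{\tilde X}=f^*K_X\lineq0$, and the identification $H^0(X,lK_X)=H^0(\tilde X,lK_{\tilde X})^G$ together with $g^*\omega^{\otimes l}=\alpha(g)^l\omega^{\otimes l}$ shows that $\idx X$ equals the index $m=\order{\Image\alpha}$ of the $G$-action. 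As $\Image\alpha\subseteq\Comp^\times$ is cyclic, $m$ is the index of a single element $g_0\in G$, and $g_0$ acts \emph{freely}.

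I would then split according to \cref{decomposition}. If $\tilde X$ is a single factor---a strict Calabi--Yau, holomorphic symplectic, or abelian variety---then $g_0$ is a fixed-point-free automorphism of index $m$, so \cref{free aut phi<2n} gives $\varphi(m)\le2n$, and \cref{phi<2n and index} (applicable since $n\ge4>3$) yields $m\in\Idxstand{n-1}$. This settles the irreducible case cleanly and is exactly the place where the companion \cref{intro small index} is consumed.

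The substantial case is when $\tilde X$ has at least two factors. Using \cref{aut of tilde X}---which shows every automorphism preserves the abelian part and permutes only mutually isomorphic strict-CY/holomorphic-symplectic factors---I would try to group the factors into a $G$-stable product $\tilde X=\tilde X_1\times\tilde X_2$ with each $\tilde X_i$ a smooth Calabi--Yau variety of dimension $1\le n_i<n$, $n_1+n_2=n$, on which $G$ acts factorwise via $G\hookrightarrow\Aut{\tilde X_1}\times\Aut{\tilde X_2}$. Setting $K_1=\ker(G\to\Aut{\tilde X_1})$ and $K_2=\ker(G\to\Aut{\tilde X_2})$, freeness of $G$ on the product forces $K_1$ to act freely on $\tilde X_2$ and $K_2$ freely on $\tilde X_1$; hence $V_2=\tilde X_2/K_1$ and $V_1=\tilde X_1/K_2$ are again \emph{smooth} Calabi--Yau of dimensions $n_2,n_1$, and their indices lie in $\Idx\sm{n_2}\subseteq\Idxstand{n_2-1}$ and $\Idx\sm{n_1}\subseteq\Idxstand{n_1-1}$ by the induction hypothesis. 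This is where the crucial ``$-1$'' saving must enter. I would combine these with the gluing quotient $Q=G/(K_1K_2)$ acting (freely) on $V_1\times V_2$, noting $X=(V_1\times V_2)/Q$, and assemble $m\in\Idxstand{n-1}$ using \cref{quot and index}, \cref{properties of Idx}, and \cref{divisor of index}; the abelian block, when present and of dimension $\ge3$, can alternatively be split off directly via \cref{abelian phi<2n} and \cref{phi<2n and index}.

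I expect the main obstacle to be precisely this reducible case, in two guises. First, for non-product (``diagonally glued'') actions with $K_1=K_2=1$, neither projected action $G\curvearrowright\tilde X_i$ is free, so the klt quotients $\tilde X_i/G$ only give the projected indices in $\Idxstand{n_i}$, and $m\mid\lcm{m_1,m_2}$ then yields merely $m\in\Idxstand n$---one dimension too many. Recovering the missing dimension here is the heart of the matter, and I anticipate it requires either exploiting that each nontrivial $g$ acts freely on at least one factor, or iterating the reduction $(G,\tilde X)\rightsquigarrow(Q,V_1\times V_2)$ to decrease $\order G$ until the action becomes a genuine product (where the double saving gives $m\in\Idxstand{n-2}\subseteq\Idxstand{n-1}$ for free). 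Second, when $G$ permutes isomorphic factors transitively---e.g.\ $\tilde X=Y^e$ with $Y$ strict Calabi--Yau or holomorphic symplectic---there need be no $G$-stable proper sub-product, so the kernel argument does not apply directly and the permutation structure of $g_0$ (with $\alpha(g_0)=\prod_i\alpha_Y(h_i)$) must be analysed to still realize $m$ in dimension $n-1$. These are the steps where I would expect the real work to concentrate.
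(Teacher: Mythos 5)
Your first half---induction on $n$, the Beauville--Bogomolov cover with a freely acting deck transformation $g_0$ of index $m$, and the single-factor case via \cref{free aut phi<2n} and \cref{phi<2n and index}---coincides exactly with the paper's proof. But the proposal is incomplete precisely where the content lies: the two ``obstacles'' you flag in the multi-factor case are not refinements to be anticipated, they \emph{are} the theorem, and neither of your suggested repairs works as stated. The kernel framework $K_i=\ker(G\to\Aut{\tilde X_i})$ yields nothing when $K_1=K_2=1$, and iterating the reduction to $(Q,V_1\times V_2)$ stalls after one step: using irreducibility of the factors one checks that the induced $Q$-action already has trivial kernels on both sides, so there is nothing further to quotient by, while a faithful diagonal action (e.g.\ a cyclic group embedded diagonally in $\Aut{\tilde X_1}\times\Aut{\tilde X_2}$) is never a genuine product; as you yourself note, this route only yields $m\in\Idxstand{n}$. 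The transitive-permutation case is likewise left as an announced difficulty. So there is a genuine gap.

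The paper closes it by never using the full group $G$ and never reconstructing $X$ as a quotient: it fixes a \emph{single} deck transformation $g$ of index $m$ and uses only $m\mid\lcm{m_1,m_2}$ together with \cref{properties of Idx,divisor of index}. If $g=g_1\times g_2$ on $Y_1\times Z$, there is a fixed-point dichotomy. If $g_1$ has a fixed point $y_0$, then every nontrivial power $g_2^k$ is fixed-point free (a fixed point $z_0$ of $g_2^k$ would make $(y_0,z_0)$ a fixed point of $g^k$, forcing $g^k=\id$), so $Z/\gengp{g_2}$ is a \emph{smooth} Calabi--Yau of dimension $n_2$ and the induction hypothesis gives $m_2\in\Idxstand{n_2-1}$, while \cref{quot and index} gives $m_1\in\Idxstand{n_1}$. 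If instead $g_1$ is fixed-point free, then \cref{index of aut of no fixed points}---which needs only the single automorphism, not the whole group, to be free---gives $\varphi\paren{m_1}\le2n_1$, hence $m_1\in\Idxstand{n_1-1}$ by \cref{phi<2n and index}, while $m_2\in\Idxstand{n_2}$. Either way one factor drops a dimension, and $m\in\Idxstand{n-1}$. If $g$ does not split, \cref{aut of tilde X} makes it a cyclic permutation of $e\ge2$ isomorphic factors twisted by $h=h_1\times\dotsb\times h_e$, and the identity you wrote down, $\alpha(g_0)=\prod_i\alpha_Y(h_i)$, is exactly what must be exploited: the index $m_1$ of $h\comp\sigma\in\Aut{Y_1^e}$ equals the index of the single automorphism $h_1\dotsm h_e\in\Aut{Y_1}$, so $m_1\in\Idxstand{n_1}$ in dimension $n_1$ rather than $en_1$, and $n_1+n_2\le n-n_1\le n-2$ already places $m$ in $\Idxstand{n-1}$ with room to spare. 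These two devices---the fixed-point dichotomy for one element, and collapsing a cycle to its composite automorphism---are the ideas missing from your proposal.
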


\begin{proof}
	Show the theorem by induction on $n$.
	The case when $n=1,2$ follows from \cref{index set in low dimension}.
	
	Let $n\ge3$.
	Let $X$ be a smooth Calabi--Yau variety of dimension $n$ with index $m$.
	It suffices to show that $m\in\Idxstand{n-1}$.
	Take a Beauville--Bogomolov decomposition $f\colon\tilde X\to X$ (\cref{decomposition}).
	Take an element $g\in\Aut{\tilde X/X}$ with index $m$.
	Note that the group $\gengp g$ acts freely on $\tilde X$.
	
	First, assume that $\tilde X$ itself is strict Calabi--Yau, holomorphic symplectic, or abelian.
	Then by \cref{free aut phi<2n}, we have $\varphi\paren m\le2n$.
	Therefore, $m\in\Idxstand{n-1}$ by \cref{phi<2n and index}.
	
	We now assume that $\tilde X=\prod_i Y_i\times A$ is decomposed into at least $2$ components (of positive dimension), where the $Y_i$ are strict Calabi--Yau or holomorphic symplectic varieties, and $A$ is an abelian variety.
	We have the following two cases:
	\begin{enumerate}
		\item the automorphism $g$ is of the form $g_1\times g_2$, where $g_1\in\Aut{Y_1}$ and $g_2\in\Aut{\prod_{i\ne1}Y_i\times A}$, or
		\item otherwise, $g$ cannot be expressed in this form.
	\end{enumerate}
	
	First, assume (1), and let $Z=\prod_{i\ne1}Y_i\times A$ so that $\tilde X=Y_1\times Z$.
	Let $n_1=\dim Y_1$ and $n_2=\dim Z$, and let $m_i$ be the index of $g_i$.
	Note that $m$ divides $\lcm{m_1,m_2}$.
	
	If $g_1$ has a fixed point,
	then the action of $g_2$ on $Z$ is free since the action of $g=g_1\times g_2$ on $\tilde X$ is free.
	Therefore, $Z/\gengp{g_2}$ is a smooth Calabi--Yau variety of dimension $n_2$ with index $m_2$, so we get $m_2\in\Idx\sm{n_2}$.
	Thus, the induction hypothesis implies that $m_2\in\Idxstand{n_2-1}$.
	Moreover, by \cref{quot and index}, we see that $m_1\in\Idxstand{n_1}$ considering $Y_1/\gengp{g_1}$.
	These imply that
	\[
		m\in\Idxstand{n_1+(n_2-1)}=\Idxstand{n-1}
	\]
	by \cref{properties of Idx,divisor of index}.
	
	We now assume that $g_1$ has no fixed point.
	Then, we have $\varphi(m_1)\le2n_1$ by \cref{index of aut of no fixed points}.
	Thus, by \cref{phi<2n and index}, we see that $m_1\in\Idxstand{n_1-1}$ if $n_1\ge3$.
	This holds true for $n_1\le2$ as well.
	In fact, $m_1=1$ (resp.\;$m_1=2$) if $n_1=1$ (resp.\;$n_1=2$) by \cref{index of aut of no fixed points}.
	Moreover, we have that $m_2\in\Idxstand{n_2}$ considering $Z/\gengp{g_2}$ and by \cref{quot and index}.
	Therefore, by \cref{properties of Idx,divisor of index}, we conclude that
	\[
		m\in\Idxstand{(n_1-1)+n_2}=\Idxstand{n-1}.
	\]

	Assume (2) in the following.
	By \cref{aut of tilde X}, after renumbering if necessary, $Y_2,\dotsc,Y_e$ are isomorphic to $Y_1$ for some $2\le e\le r$, and the automorphism $g$ is of the form
	\[
		g=(h\circ\sigma)\times h'\in(\Aut{Y_1}^e\rtimes S_e)\times\AUT{\prod_{i>e}Y_i\times A}
	\]
	for some $h\in\Aut{Y_1}^e$, $\sigma\in S_e$ and $h'\in\Aut{\prod_{i>e}Y_i\times A}$.
	Since $\sigma$ can be decomposed into a product of disjoint cycles, we may choose one of these cycles and assume that $\sigma$ is a single cycle.
	Moreover, by renumbering, we assume that $\sigma$ is given by
	\[
		\sigma\colon\paren{y_1,\dotsc,y_e}
		\mapsto\paren{y_2,\dotsc,y_e,y_1}.
	\]
	We now let $Z=\prod_{i>e}Y_i\times A$, $n_1=\dim Y_1$ and $n_2=\dim Z$, so that $n=en_1+n_2$.
	Let $m_1$ be the index of $h\circ\sigma\in\Aut{Y_1^e}$, and $m_2$ be the index of $h'\in\Aut Z$.
	Note that $m$ divides $\lcm{m_1,m_2}$.
	We write $h=h_1\times\dotsb\times h_e$ with $h_i\in\Aut{Y_1}$.
	Then the powers of $h\comp\sigma$ are computed as
	\[
		\paren{h\comp\sigma}^i
		=\paren{h_1h_2\dotsm h_i\times h_2h_3\dotsm h_{i+1}\times\dotsb\times h_eh_1\dotsm h_{i+e-1}}\comp\sigma^i,
	\]
	where the $i$ in the $h_i$ denote the values in $\finset{1,\dotsc,e}$ modulo $e$.
	Thus, we observe that the automorphism $h_1\dotsm h_e$ of $Y_1$ has finite order.
	Moreover, the index of $h_1\dotsm h_e\in\Aut{Y_1}$ is $m_1$.
	Therefore, considering the quotient $Y_1/\gengp{h_1\dotsm h_e}$, we see that $m_1\in\Idxstand{n_1}$ by \cref{quot and index}.
	We also obtain that $m_2\in\Idxstand{n_2}$ considering $Z/\gengp{h'}$.
	Since $n_1+n_2<en_1+n_2=n$, it follows from \cref{properties of Idx,divisor of index} that
	\[
		m\in\Idxstand{n_1+n_2}\subseteq\Idxstand{n-1}. \qedhere
	\]
\end{proof}

\printbibliography

\end{document}